\documentclass[a4paper,11pt]{article}

\usepackage{a4wide}
\usepackage[utf8]{inputenc}
\usepackage[T1]{fontenc}
\usepackage[english]{babel}
\usepackage{amsmath}
\usepackage{amssymb}
\usepackage{url}
\usepackage{multirow}
\usepackage{amsfonts}
\usepackage{amsthm}
\usepackage{indentfirst}
\usepackage{bbm}
\usepackage{color}
\usepackage{cite}

\mathchardef\re="023C
\mathchardef\im="023D

\newtheorem{theorem}{Theorem}

\newtheorem{defin}{Definition}

\newtheorem{propo}{Proposition}

\newtheorem{lema}{Lemma}

\newtheorem{thm}{Theorem}[section]
\newtheorem{prop}[thm]{Proposition}
\newtheorem{lem}[thm]{Lemma}
\newtheorem{defi}[thm]{Definition}
\newtheorem{cl}[thm]{Claim}
\newtheorem{cor}[thm]{Corollary}

\def\dif{\mathrm{d}}
\def\be{\begin{equation}}
\def\ee{\end{equation}}
\def\ra{\rightarrow}
\newcommand{\bcl}{\begin{cl}}
\newcommand{\ecl}{\end{cl}}
\newcommand{\bpf}{\begin{proof}}
\newcommand{\epf}{\end{proof}}
\newcommand{\bcor}{\begin{cor}}
\newcommand{\ecor}{\end{cor}}
\newcommand{\bthm}{\begin{thm}}
\newcommand{\ethm}{\end{thm}}

\newcommand{\B}{\mathbb{B}}

\newcommand{\Sp}{{\mathbb S}}

\newcommand{\supp}{\mathop{\mathrm{supp}\,}}

\newcommand{\IR}{\mathbb{R}}

\newcommand{\dd}{\,\mathrm{d}}

\numberwithin{equation}{section}

\begin{document}
\author{Miodrag Mateljevi\'c \thanks{Serbian Academy of Science and Arts, Kneza Mihaila 35, Belgrade, Serbia (\tt miodrag@matf.bg.ac.rs)}
\and
Nikola Mutavd\v{z}i\'c \thanks{Mathematical institute SANU, Kneza Mihaila 36, Belgrade, Serbia (\tt nikolam@matf.bg.ac.rs)}}
\title{On Lipschitz continuity and smoothness up to the boundary of solutions of hyperbolic
Poisson’s equation}


\maketitle

\thanks{Research partially supported by MNTRS, Serbia,  Grant No. 174 032}

\begin{abstract}
We solve the Dirichlet problem $\left.u\right|_{\mathbb{B}^n}=\varphi,$ for hyperbolic Poisson's equation $\Delta_h u=\mu$ where $\varphi\in L_1(\partial \mathbb{B}^n)$ and $\mu$ is a measure that satisfies a growth condition.

Next we present a short proof for Lipschitz continuity of solutions of certain hyperbolic Poisson's equations, previously established at \cite{ChenRas}.

In addition, we investigate some alternative assumptions on hyperbolic Laplacian, which are connected with Riesz's potential.  Also, local H\"{o}lder continuity is proved for solution of certain hyperbolic Poisson's equations.

We show that, if $u$ is  hyperbolic harmonic in the upper half-space, then $\frac{\partial u}{\partial y}(x_0,y)\to 0, y\to 0^+$, when boundary function $f$ of the functions $u$ is differentiable at the boundary point $x_0$. As a corollary, we show $C^1(\overline{\mathbb{H}^n})$ smoothness of a hyperbolic harmonic function, which is reproduced from the $C_c^1(\mathbb{R}^{n-1})$ boundary values.

\end{abstract}
\section{Introduction}

First, we give some notation. Let $x=(x_1,x_2,\ldots,x_n)\in\mathbb{R}^n$, and by $|x|$ we denote Euclidean norm of vector $x$, and $xy$ denotes the scalar product $\sum\limits_{j=1}^nx_jy_j$.\smallskip

For $R>0$, by $B(a,R)$ and $S(a,R)$ we denote the ball and the sphere in $\mathbb{R}^n$ with center at $a$ of radius $R$. By $B(R)$ and $S(R)$ we denote $B(0,R)$ and $S(0,R)$. We use $\mathbb{B}^n$ and $\mathbb{S}^{n-1}$ for $B(1)$ and $S(1)$.\smallskip

Let $\Omega$ be an open subset of $\mathbb{R}^n$.We use standard notation for function spaces on $\Omega$ (see \cite{gil.trud}).\\
$C^{k}(\Omega)$: the set of functions having all derivatives of order less then or equal to $k$ continuous in $\Omega$.\\
$C^k(\overline{\Omega})$: the set of functions in $C^k(\Omega)$ all of whose derivatives of order less than or equal to $k$ have continuous extensions to $\overline{\Omega}$.\\
$\supp u$: the support of $u$, the closure of the set on which $u\neq 0$.\\
$C^k_c(\Omega)$: the set of functions in $C^k(\Omega)$ with compact support in $\Omega$.\\
Let $x_0\in  D$, where $D$ is a bounded subset of $\mathbb{R}^n$ and $f$ is a function defined on $D$. For $0<\alpha<1$, we say that $f$ is {\it H\"{o}lder continuous} with exponent $\alpha$ at $x_0$ if $f$ is continuous at $x_0$ and\vspace{-2mm}
    $$
    \sup_{x\in D\setminus \{x_0\}}\frac{|f(x)-f(x_0)|}{|x-x_0|^{\alpha}}<+\infty.\vspace{-2mm}
    $$
    When $\alpha=1$, we say that $f$ is Lipschitz-continuous at $x_0$.\smallskip

Suppose that $D$ is not necessarily bounded. We say that $f$ is {\it uniformly H\"{o}lder continuous} with exponent $\alpha$ in $D$ if\vspace{-2mm}
    $$
    \sup_{\substack{x,y\in D,\\ x\neq y}}\frac{|f(x)-f(y)|}{|x-y|^{\alpha}}<+\infty,\quad 0<\alpha<1.\vspace{-2mm}
    $$
Let $\Omega$ be an open
set in $\mathbb{R}^n$ and $k$ a non-negative integer. The {\it H\"{o}lder spaces} $C^{k,\alpha}(\Omega) (C^{k,\alpha}(\overline{\Omega}))$ are
defined as the subspaces of $C^k(\Omega) (C^k(\overline{\Omega}))$ consisting of functions whose $k$-th order
partial derivatives are uniformly H\"{o}lder continuous (H\"{o}lder continuous) with exponent $\alpha$ in $\Omega$. $Lip(\Omega)$ denotes class of function which are Lipshitz continuous on the set $\Omega$.

\subsection{M\"{o}bius transformation in space}

\begin{defi}
  Inversion with respect to unit sphere in $\mathbb{R}^n$ is defined as\vspace{-1mm}
  $$J(a)=a^*=\frac{a}{|a|^*} \mbox{ for } a\neq 0 \mbox{ and } J(0)=\infty, J(\infty)=0.$$
\end{defi}

Following \cite{ahlMob}, the reflection  (inversion)  with respect to sphere $S(a,R)$ is given by\vspace{-1mm}
\begin{equation}
\sigma_a x=  a^*  +  R(a)^2  (x- a^*)^*\,.\vspace{-1mm}
\end{equation}

The following matrix represents orthogonal projection in $\mathbb{R}^n$ onto the one-dimensional subspace spanned by $x\in\mathbb{R}^n, x\neq 0$.\vspace{-1mm}
$$Q(x)_{ij}=\frac{x_i x_j}{|x|^2}.$$

Let $r_a:y\mapsto y'$ be the reflection with respect to the hyper plane orthogonal to $a\neq 0$ which contains the origin. It is easy to check that  $r_a= \mathrm{Id}- 2Q(a)$. Indeed,\vspace{-2mm} $$y'=(\mathrm{Id}- 2Q(a))y= y - 2 \frac{(ya) a}{|a|^2}.\vspace{-2mm}$$

Define
$T_a= r_a\circ \sigma_a= (\mathrm{Id}- 2Q(a))\sigma_a $. Check\vspace{-2mm}
\begin{equation*}
T_a x=\frac{(1-|a|^2)(x-a) - |x-a|^2 a}{[x,a]^2},\vspace{-1mm}
\end{equation*}
where   $[x,a]= |x| |x^*- a|= |a| |x-a^*|$   and    $[x,a]^2=1+|x|^2|a|^2 - 2x a$.
It is easy to check  that  $T_a a=0$, $T_a0=-a$   and $T_a$ maps  $[-\hat{a}, \hat{a}]$ onto itself. Also,\vspace{-1mm}
\begin{equation}\label{IdentMob1}
|T'_y x|= \dfrac{1-|y|^2}{[x,y]^2},\quad \mbox{and in particular}\quad |T'_y0|= \dfrac{1-|y|^2}{[0,y]^2}= 1-|y|^2.\vspace{-2mm}
\end{equation}
Denote by $\widehat{M}(\mathbb{B}^n)$ the set of all M\"{o}bius transformations in $\mathbb{B}^n$. For more informations about the M\"{o}bius transformations in $\mathbb{B}^n$,  see \cite{ahlMob}.
It is well known that\vspace{-2mm}
\begin{equation}\label{IdentMob2}
1-|T_ax|^2= \frac{(1-|a|^2)(1-|x|^2)}{[x,a]^2}, \vspace{-2mm}
\end{equation}
and  if  $\gamma \in\widehat{M}(\mathbb{B}^n)$, where $\widehat{M}(\mathbb{B}^n)$ is , then\vspace{-1mm}
$$\dfrac{|\gamma x-\gamma y|}{|[\gamma x,\gamma y]|} {=} \dfrac{|x-y|}{|[x,y]|}.\vspace{-1mm}$$
In the literature authors often use $-T_a$ instead of $T_a$. To avoid possible confusion we denote $-T_a$  with  $\varphi_a$.\smallskip
\subsection{Hyperbolic Poisson's equation}
Recall that hyperbolic Laplace operator in the $n$-dimensional hyperbolic ball
$\mathbb{B}^n$ for $n\geqslant 2$ is defined as\vspace{-3mm}
$$ \Delta_{h}u(x)= (1-|x|^2)^2\Delta u(x)+2(n-2)(1-|x|^2)\sum_{i=1}^{n}
x_{i} \frac{\partial u}{\partial x_{i}}(x).\vspace{-2mm} $$

In terms of the mapping $\varphi_a$, the {\it hyperbolic metric $d_h$} in $\mathbb{B}^n$ is given by\vspace{-2mm}
$$d_h(a,b)=\log\left(\frac{1+|\varphi_a(b)|}{1-|\varphi_a(b)|}\right)\vspace{-2mm}$$
for all $a,b\in\mathbb{B}^n$.

For all $\varphi\in\widehat{M}(\mathbb{B}^n)$, in \cite{ahlMob} it is proved that:\vspace{-2mm}
\begin{equation}\label{hLapInv}
  \Delta_h(u\circ\varphi)=\Delta_h u\circ\varphi.\vspace{-2mm}
\end{equation}

We say that $u:\mathbb{B}^n\to\mathbb{R}$ satisfies {\it hyperbolic Laplace equation} if $\Delta_h u =0$ in $\mathbb{B}^n$. Non-homogenous hyperbolic Laplace equation, i.e. $\Delta_h u =\psi, \psi\not\equiv 0$ is called {\it hyperbolic Poisson's equation}.
\subsection{Hyperbolic Poisson's kernel and Hyperbolic Green function}
Set $0<r<1$ and define \vspace{-2mm}
\be
g(r):=\int_r^1 \frac{(1-t^2)^{n-2}}{t^{n-1}}\dd t.\vspace{-2mm}
\ee


Then hyperbolic Green function is given by\vspace{-2mm}
\be
g(x,y)=G_h(x,y)=g(|T_y x|)=g\left(\frac{|x-y|}{[x,y]}\right).
\ee

For  $n=2$ holds $g(r)=\log \frac{1}{r}$  and  for $n>2$ holds  $g(r)\sim \frac{1}{n-2}r^{2-n}$ if  $r\rightarrow 0$, and  $g(r)=O((1-r)^{n-1})$ if $r\rightarrow 1$.\smallskip

Let  $\dd\sigma$ be  the $(n-1)$-dimensional Lebesgue measure normalized so that $\sigma(\mathbb{S}^{n-1}) = 1$.

The Poisson-Szego kernel $P_h$ for hyperbolic laplacian  $\Delta_h$ is given by\vspace{-1mm}
\be
P_h (x, t) = \left(\frac{1 - |x|^2|}{|t - x|^2}\right)^{n-1},\vspace{-1mm}
\ee
which satisfies \vspace{-1mm}
$$\int_{\mathbb{S}^{n-1}}P_h (x, t) \dd\sigma (t) = 1.\vspace{-1mm}$$

Let us define hyperbolic Poisson's integral\vspace{-2mm}
\be
P_h [f](x) =
\int_{\mathbb{S}^{n-1}}P_h (x, t) f(t) \dd\sigma (t).\vspace{-2mm}
\ee
for $f\in L_1(\mathbb{S}^{n-1})$ and, also, the hyperbolic Green integral\vspace{-2mm}
$$G_{h}[\psi](x)=  \int_{\mathbb{B}^n} G_{h}(x,y) \psi(y) \dd\tau (y), \vspace{-2mm}$$
for appropriate functions $\psi$ and \vspace{-2mm}
$$\dd\tau(x)=\frac{\dd\nu(x)}{(1-|x|^2)^n}\vspace{-2mm}$$
where $\nu$ is the $n$-dimensional Lebesgue volume measure normalized so that $\nu(\mathbb{B}^n)=1$.
\subsection{Main results}

Dirchet problem is well understood   for smooth metrics.  For example, one can see Chapter IX of \cite{shen-yau}.
It turns out that this problem for hyperbolic  metric on the unit  ball with boundary data on the unit sphere is very interesting and it  is considered recently in \cite{ChenRas}. Here the metric density  goes to  $\infty $ near the boundary.
Among the other things,  J. Chen, M. Huang, A. Rasila and X. Wang used nice properties of M\"{o}bius transformation and   hyperbolic  Green function of  the unit  ball (described for example  in \cite{ahlMob,stoll}) and integral estimate.
Precisely, the authors show that if $n\geqslant 3$ and $u\in
C^{2}(\mathbb{B}^{n},\IR^n) \cap C(\overline{\mathbb{B}^{n}},\IR^n )$ is a solution to the hyperbolic Poisson equation, then it has a
representation\vspace{-2mm}
\begin{equation}\label{RepHyp}u=P_{h}[\phi]-G_{h}[\psi],\vspace{-2mm}\end{equation}
provided that\vspace{-2mm}
$$u\mid_{\mathbb{S}^{n-1}}=\phi\quad \mbox{and}\quad
\int_{\mathbb{B}^{n}}(1-|x|^{2})^{n-1} |\psi(x)|\,d\tau(x)<\infty.\vspace{-1mm}$$
Here $P_{h}$ and $G_{h}$ denote Poisson and Green integrals with respect to $\Delta_{h}$, respectively. Furthermore, they  prove that functions of the form $u=P_{h}[\phi]-G_{h}[\psi]$ are Lipschitz continuous.

This can be stated as follows: \\
Let us consider the following Dirichlet boundary problem\vspace{-2mm}
\be\label{eqHypDir}\left\{
\begin{array}{ll}
 u(x)=\phi(x),        & \hbox{if} \,\, x\in \mathbb{S}^{n-1}, \\
(\Delta_h)u(x)=\psi, & \hbox{if}  \,\,  x\in \mathbb{B}^n .
\end{array}
\right.\vspace{-2mm}\ee
\begin{theorem}\cite{ChenRas}\label{ChenRas1}
Suppose that $u \in  C^2(\mathbb{B}^n , \mathbb{R}^n ) \cap C(\overline{\mathbb{B}^n} , \mathbb{R}^n)$ for  $n\geqslant 3$ and\vspace{-3mm}
$$\int_{\mathbb{B}^n}(1 {-} |x|^2)^{n{-}1}|\psi(x)| \dd\tau (x) {\leqslant}\mu_1, \mbox{ where }\mu_1 > 0\mbox{ is a constant. }\hspace{1cm}\vspace{-2mm}$$
If u satisfies (\ref{eqHypDir}), then\vspace{-2mm}
\noindent\begin{enumerate}
\item[$(1)$] $u = P_h [\phi] - G_h [\psi]$   and\vspace{-2mm}
\item[$(2)$] $U=u\circ \varphi_x=  P_h [\phi\circ \varphi_x] - G_h [\psi\circ \varphi_x]$,  $x \in \mathbb{B}^n$ .
\end{enumerate}
\end{theorem}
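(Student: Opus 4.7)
The plan is to reduce both assertions to the uniqueness of the hyperbolic Dirichlet problem. For part (1), I would introduce the candidate $v := P_h[\phi] - G_h[\psi]$ and prove $u \equiv v$ by showing that $w := u - v$ is continuous on $\overline{\mathbb{B}^n}$, hyperbolic harmonic on $\mathbb{B}^n$, and vanishes on $\mathbb{S}^{n-1}$; the maximum principle for $\Delta_h$ then forces $w \equiv 0$.

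The first step is to verify that $v$ is well-defined and has the advertised properties. The Poisson--Szegő integral $P_h[\phi]$ is $\Delta_h$-harmonic and extends continuously to $\mathbb{S}^{n-1}$ with boundary values $\phi$ by standard theory. The growth hypothesis on $\psi$ is tailored precisely so that $G_h[\psi]$ converges absolutely: using the asymptotics $g(r)\sim r^{2-n}/(n-2)$ as $r\to 0^+$ and $g(r)=O((1-r)^{n-1})$ as $r\to 1$, together with $G_h(x,y)=g(|x-y|/[x,y])$, the kernel can be dominated by an expression whose integrability against $|\psi|$ is controlled by $\mu_1$. A careful analysis then gives $\Delta_h G_h[\psi] = -\psi$ classically on $\mathbb{B}^n$ and $\lim_{x\to\zeta} G_h[\psi](x) = 0$ for every $\zeta\in\mathbb{S}^{n-1}$. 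Consequently $v$ solves the same Dirichlet problem (\ref{eqHypDir}) as $u$, so $w$ vanishes on $\mathbb{S}^{n-1}$ and is $\Delta_h$-harmonic in the interior; the maximum principle concludes (1).

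For part (2), apply the M\"obius invariance identity (\ref{hLapInv}): $\Delta_h(u\circ\varphi_x)=(\Delta_h u)\circ\varphi_x=\psi\circ\varphi_x$, while $U|_{\mathbb{S}^{n-1}}=\phi\circ\varphi_x$ because $\varphi_x$ preserves $\mathbb{S}^{n-1}$ setwise. It remains to check that the growth condition is preserved under pullback, so that part (1) may be applied to $U$. Here I would use the M\"obius invariance of the measure $\dd\tau$ together with (\ref{IdentMob2}), performing the change of variables $y=\varphi_x^{-1}(z)$ in
$$
\int_{\mathbb{B}^n}(1-|y|^2)^{n-1}\,|\psi(\varphi_x(y))|\dd\tau(y)
$$
and absorbing the resulting factor $((1-|x|^2)/[x,\cdot]^2)^{n-1}$, which is bounded on $\mathbb{B}^n$, into a constant depending on $x$ times $\mu_1$. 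Then (2) is immediate from (1).

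The main obstacle is the boundary behaviour of the Green potential: showing $G_h[\psi](x)\to 0$ as $x\to\mathbb{S}^{n-1}$ under only an integral hypothesis on $\psi$ is the delicate point. The natural strategy is to split $\mathbb{B}^n$ into a thin shell near $\mathbb{S}^{n-1}$ and its complement, apply dominated convergence on the far piece (where $G_h(x,y)$ is uniformly small), and exploit the $(1-|y|^2)^{n-1}$--type decay of $g(|T_y x|)$ on the near piece to absorb the interior singularity by the growth bound $\mu_1$. Everything else (existence and boundary values of $P_h[\phi]$, M\"obius invariance, and the $\Delta_h$-maximum principle) is classical once this decay step is in hand.
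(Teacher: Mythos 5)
Your overall architecture (uniqueness via a candidate $v=P_h[\phi]-G_h[\psi]$, then M\"obius invariance for part (2)) is reasonable, and part (2) — including the change of variables showing that $\psi\circ\varphi_x$ inherits the integrability condition with a constant $\bigl((1-|x|^2)/(1-|x|)^2\bigr)^{n-1}\mu_1$ — is fine. The genuine gap is the step you yourself flag as delicate: the assertion that $\lim_{x\to\zeta}G_h[\psi](x)=0$ for \emph{every} $\zeta\in\mathbb{S}^{n-1}$ under only the hypothesis $\int_{\mathbb{B}^n}(1-|y|^2)^{n-1}|\psi(y)|\dd\tau(y)\leqslant\mu_1$. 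This is false. Under that hypothesis alone the only available boundary statement is Stoll's Theorem 9.4.1 (quoted in Section 2 of this paper): the Green potential has radial limit $0$ for $\sigma$-\emph{almost every} $t\in\mathbb{S}^{n-1}$ — which is precisely why the paper's own generalisation in Section 2 asserts only a.e.\ radial convergence. To see that your near/far splitting cannot be pushed to an everywhere statement, place smooth nonnegative bumps $\psi_k$ on hyperbolic unit balls centred at $y_k=(1-2^{-k})e_n$ with $\int\psi_k\dd\tau=1$; then $\sum_k(1-|y_k|^2)^{n-1}\int\psi_k\dd\tau<\infty$, so the integrability condition holds, yet $G_h[\psi](y_k)\geqslant g(r_1)>0$ for a fixed $r_1<1$ by M\"obius invariance of $G_h$, so $G_h[\psi]$ does not vanish along the radius to $e_n$. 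Consequently $v$ need not lie in $C(\overline{\mathbb{B}^n})$ with boundary values $\phi$, and the maximum-principle comparison with $u$ does not close. (A second, more cosmetic issue: $\Delta_h G_h[\psi]=-\psi$ \emph{classically} requires more than continuity of $\psi$; but this is harmless, since one can verify $\Delta_h(u-v)=0$ distributionally and invoke elliptic regularity.)

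The repair is not to prove boundary vanishing of $G_h[\psi]$ directly — in the theorem it is a \emph{consequence} of the existence of the continuous solution $u$, since $G_h[\psi]=P_h[\phi]-u$, so arguing that way is circular. The route actually taken in \cite{ChenRas} (the paper under review only cites the result; it does not reprove it) is an exhaustion argument: write the classical Green/Poisson representation of $u$ on the sub-ball $B(0,\rho)$, $\rho<1$, where $u$ is $C^2$ up to the (interior) boundary, and let $\rho\to 1^-$; the continuity of $u$ on $\overline{\mathbb{B}^n}$ drives the Poisson term to $P_h[\phi]$, while the integrability condition and monotone/dominated convergence drive the Green term to $G_h[\psi]$. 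Alternatively one can combine the a.e.\ radial limit theorem with a Riesz-decomposition uniqueness argument, but some such device replacing the everywhere boundary limit of $G_h[\psi]$ is indispensable.
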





In \cite{ChenRas} the following result is also established:
\begin{theorem}\label{ChenRasThm}\cite[Theorem 1.2]{ChenRas}
  Let $n\geq 3$. Suppose that\vspace{-2mm}
  \begin{itemize}
    \item[(1)] $u\in C^2(\mathbb{B}^n,\mathbb{R}^n)\cap C(\overline{\mathbb{B}^n},\mathbb{R}^n)$ is of the form (\ref{RepHyp});\vspace{-2mm}
    \item[(2)] there is a constant $L\geqslant 0$ such that $|\phi(\xi)-\phi(\eta)|\leqslant L|\xi-\eta|$ for all $\xi,\eta\in\mathbb{S}^{n-1}$;\vspace{-2mm}
    \item[(3)] there is a constant $M\geqslant 0$ such that $|\psi(x)|\leqslant M(1-|x|^2)$ for all $x\in\mathbb{B}^n$.\vspace{-2mm}
  \end{itemize}
  Then, there is a constant $N=N(n,L,M)$ such that for $x,y\in\mathbb{B}^n$,\vspace{-2mm}
  $$|u(x)-u(y)|\leqslant N|x-y|,\vspace{-2mm}$$
  where the notation $N=N(n,L,M)$ means that the constant $N$ depends only on the quantities $n, L$ and $M$.\vspace{-3mm}
\end{theorem}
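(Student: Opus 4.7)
The plan is to establish the stronger uniform gradient bound $|\nabla u(x)|\leq C(n,L,M)$ on $\mathbb{B}^n$, from which the Lipschitz conclusion follows by integration along segments. The key device is Möbius invariance (Theorem~\ref{ChenRas1}): for each fixed $x\in\mathbb{B}^n$, put $U_x = u\circ\varphi_x$, so that $U_x(0)=u(x)$ and $U_x = P_h[\phi\circ\varphi_x]-G_h[\psi\circ\varphi_x]$. Since $\varphi_x$ is conformal with $|D\varphi_x(0)|=1-|x|^2$ by (\ref{IdentMob1}), the chain rule gives $(1-|x|^2)\,|\nabla u(x)| = |\nabla U_x(0)|$; thus it suffices to prove $|\nabla U_x(0)|\leq C\,(1-|x|^2)$ uniformly in $x$. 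Split $U_x=V-W$ with $V=P_h[\phi\circ\varphi_x]$ and $W=G_h[\psi\circ\varphi_x]$, and set $\hat x := x/|x|$.

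For $V$: differentiating the Poisson kernel at $\xi=0$ gives the clean formula $\nabla_\xi P_h(0,t)=2(n-1)\,t$, whence $\nabla V(0) = 2(n-1)\int_{\mathbb{S}^{n-1}} t\,\phi(\varphi_x t)\,d\sigma(t)$. Using the symmetry $\int_{\mathbb{S}^{n-1}} t\,d\sigma(t)=0$ to subtract the constant $\phi(\hat x)$, the Lipschitz hypothesis yields
\[
|\nabla V(0)| \leq 2(n-1)L\int_{\mathbb{S}^{n-1}} |\varphi_x t-\hat x|\,d\sigma(t).
\]
The change of variables $s=\varphi_x t$ (the spherical Jacobian of the Möbius involution $\varphi_x$ equals $P_h(x,s)$) reduces this to the Poisson moment $\int_{\mathbb{S}^{n-1}}|s-\hat x|\,P_h(x,s)\,d\sigma(s)$; a spherical-coordinate computation around $\hat x$, exploiting the concentration scale $1-|x|$ of $P_h(x,\cdot)$, bounds this moment by $O(1-|x|)$. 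Hence $|\nabla V(0)|\leq C_1 L\,(1-|x|^2)$.

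For $W$: differentiate under the integral, apply the chain rule to $G_h(\xi,y)=g(|T_y\xi|)$, and use $g'(r)=-(1-r^2)^{n-2}/r^{n-1}$ along with (\ref{IdentMob1})--(\ref{IdentMob2}). Evaluating at $\xi=0$ (where $T_y 0 = -y$, so $|T_y 0|=|y|$ and $|T'_y 0|=1-|y|^2$) gives
\[
|\nabla_\xi G_h(0,y)| = (1-|y|^2)^{n-1}/|y|^{n-1}.
\]
Combining with $|\psi(z)|\leq M(1-|z|^2)$ and the Möbius identity (\ref{IdentMob2}) in the form $1-|\varphi_x y|^2=(1-|x|^2)(1-|y|^2)/[x,y]^2$, together with $d\tau(y)=d\nu(y)/(1-|y|^2)^n$, the $(1-|y|^2)$ powers cancel and one obtains
\[
|\nabla W(0)| \leq M\,(1-|x|^2)\int_{\mathbb{B}^n}\frac{d\nu(y)}{|y|^{n-1}\,[x,y]^2}.
\]

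The main obstacle is showing the last integral is uniformly bounded in $x\in\mathbb{B}^n$. The key tool is the algebraic identity $[x,y]^2=(1-|x|^2)(1-|y|^2)+|x-y|^2$, easily verified by expansion. Splitting $\mathbb{B}^n$ into $A=\{y:|y-x|<1-|x|^2\}$ and $B=\mathbb{B}^n\setminus A$: on $A$, $[x,y]^2\asymp(1-|x|^2)^2$ and $\nu(A)=O((1-|x|^2)^n)$; on $B$, $[x,y]\gtrsim|x-y|$, and $\int_B d\nu(y)/|x-y|^2$ is uniformly bounded because $n\geq 3$ makes $\rho^{n-3}$ integrable on $(0,1)$ when $\rho=|x-y|$. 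The singularity at $y=0$ is absorbed by the $r^{n-1}$ factor in $d\nu=n\,r^{n-1}\,dr\,d\sigma$. Combining the two pieces gives $|\nabla W(0)|\leq C_2 M\,(1-|x|^2)$; added to the Poisson bound, $|\nabla u(x)|\leq C_1 L + C_2 M$, completing the proof.
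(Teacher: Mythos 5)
Your proposal is correct in strategy and reaches the same intermediate objects as the paper, but executes two of the key estimates differently. Like the paper, you reduce everything to the uniform gradient bound via the recentering $U_x=u\circ\varphi_x$ and the conformality relation $|\nabla U_x(0)|=(1-|x|^2)|\nabla u(x)|$ (the paper's (\ref{aut-est})), and for the Green part you arrive at exactly the paper's integral $\int_{\mathbb{B}^n}\frac{\dd\nu(y)}{|y|^{n-1}[x,y]^2}$ from Proposition~\ref{MainProp}. The differences: (i) for the Poisson part the paper differentiates $P_h(x,t)$ at a general interior point and estimates $(1-|x|^2)^{n-2}\int_{\mathbb{S}^{n-1}}|e_n-t|\,|x-t|^{-2(n-1)}\dd\sigma(t)$ directly (Proposition~\ref{prop2a} with $\alpha=1$), whereas you differentiate at the origin and pull back by $\varphi_x$ to the Poisson moment $\int_{\mathbb{S}^{n-1}}|s-\hat x|P_h(x,s)\dd\sigma(s)$ --- these are the same integral up to the factor $(1-|x|^2)$, so this is a cosmetic reshuffling, though your version makes the contrast with the Euclidean kernel (where the moment picks up a logarithm) more transparent; (ii) for the uniform bound on $\int\frac{\dd\nu(y)}{|y|^{n-1}[x,y]^2}$ the paper passes to polar coordinates centred at the origin and one-dimensional integrals via Proposition~\ref{MMThmSphere} and the substitution $\theta=(1-\rho)u$ (Lemmas~\ref{IntLem1} and~\ref{IntLem2}), while you use the identity $[x,y]^2=(1-|x|^2)(1-|y|^2)+|x-y|^2$ with a near/far splitting about $x$. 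Your route is more geometric and avoids the special treatment of $n=3$ in Lemma~\ref{IntLem1}(ii).

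One detail in (ii) deserves more care than your sketch gives it: the integrand has two singularities, at $y=0$ from $|y|^{-(n-1)}$ and at $y=x$ from $[x,y]^{-2}$, and your splitting is centred at $x$ while the "absorbed by $r^{n-1}$" remark refers to polar coordinates centred at the origin. You should say explicitly that the two singularities never interact: if $|x|\leqslant 1/2$ then $[x,y]\geqslant 1-|x||y|\geqslant 1/2$ and the integral is trivially $\preceq\int_{\mathbb{B}^n}|y|^{1-n}\dd\nu(y)<\infty$, while if $|x|\geqslant 1/2$ then $|y|$ is bounded below on $A$ and the set $\{|y|<|x|/2\}$ is at distance $\geqslant|x|/2$ from $x$, so each singularity is handled separately. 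With that sentence added, your argument is complete.
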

In fact, they prove more general result: \vspace{-3mm}
\begin{itemize}
\item[(A)] If function $\phi$ satisfies $(1)$ then $\Phi=P_h[\phi]$ is Lipshitz and\vspace{-3mm}
\item[(B)] if function $\psi$ satisfies $(3)$ then $\Psi=G_h[\psi]$ is Lipshitz.\vspace{-3mm}
\end{itemize}

Let us note that statement corresponding to $(A)$ is not valid for the euclidean Poisson's integral. For more details, see below.

In Proposition \ref{MainProp} we give a brief proof of the above theorem. In \cite[Lemma 5.2]{ChenRas} the authors used some hypergeometric series techniques for proving inequalities $(\ref{cl4})$ and $(\ref{cl5})$. Instead of this approach we used Proposition \ref{MMThmSphere} and some basic inequalities, which are modification of some estimates from planar Hardy space theory (for details see Lemma \ref{IntLem1}, \ref{IntLem2} and Lemma \ref{mainProp1}), where the simple change of variables\vspace{-2mm}
\begin{equation*}
\theta=(1-r) u,\vspace{-2mm}
\end{equation*}
in the integral \vspace{-2mm}
$$\int\limits_0^{\infty}\frac {\theta^{\alpha +n-2}}
{\left((1-r)^2+\frac{4r}{\pi^2}\, \theta^2\right)^{n-1}}\,d\theta,$$
gives optimal growth estimates, for our purposes.

In Theorem \ref{thmloc0}, it is proved, that local $\alpha-$H\"{o}lder continuity ($0<\alpha\leqslant 1$) at the boundary point $x$ of $\mathbb{B}^n$ implies H\"{o}lder continuity along the whole radius of that point. This result is analogous to \cite[Theorem 6.2]{mss}, except for the case $\alpha=1$. The case of Lipshitz boundary function $\phi$ for harmonic functions is investigated by the first author with M. Arsenovi\'{c} and V. Mabojlovi\'{c} in  \cite{MAVMMM}. Under additional condition that $P[\phi]$ is $K$-quasiregular, they proved that $P[\phi]$ is Lipshitz on $\mathbb{B}^n$.


In \cite{HypHar2} the authors proved that condition $(3)$ can't be excluded from the statement of the Theorem \ref{ChenRasThm}. In subsection 3.1 we introduced some alternative assumptions on hyperbolic Laplacian $\psi$ in order to get H\"{o}lder or Lipshitz continuity of hyperbolic Green potential of $\psi$. Euclidean Green potential, among other things, is investigated by the first author in \cite{Mat}.

In Section 5, we proved, when we assume $C_c^1$ smoothness of boundary  functions $f$, that all partial derivatives of hyperbolic Poisson's integral of $f$ can be continuously extended to the boundary of $\mathbb{H}^n$. At first, we show that, if $u$ is  hyperbolic harmonic in the upper half-space, then $\frac{\partial u}{\partial y}(x_0,y)\to 0, y\to 0^+$, when boundary function $f$ of the functions $u$ is differentiable at the boundary point $x_0$. Generally, this is not true for (Euclidean) harmonic functions. In \cite{Pri} we can see an example of $g\in C^1(\mathbb{R})$ such that (Euclidean) harmonic extension u of $g$ in $\mathbb{H}^2$ satisfies $\limsup\limits_{ y\to 0^+}\frac{\partial u}{\partial y}(0,y)= +\infty$.






\smallskip

\section{Dirichlet problem for Hyperbolic Poisson's equation}

For Harmonic and Subharmonic Function Theory on the Hyperbolic Ball we refer the interested reader to Stoll \cite{stoll}.

\begin{defin}[Definition 4.3.1 \cite{stoll}]
  Let $D$ be a subset of $\mathbb{R}^n$. A function $f:D\to[-\infty,+\infty)$ is \textit{upper semicontinuous} at $x_0\in\ D$ if for every $\alpha\in\mathbb{R}^n$  with $\alpha>f(x_0)$ there exists a $\delta>0$ such that\vspace{-1mm}
  $$f(x)<\alpha \mbox{ for all } x\in D\cap B(x_0,\delta).\vspace{-2mm}$$
\end{defin}
\begin{defin}[Definition 4.3.3 \cite{stoll}]
  Let $\Omega$ be an open subset of $\mathbb{B}^n$. An upper semicontinuous function $f:\Omega\to [-\infty,+\infty)$, with $f\not\equiv -\infty$, is $\mathcal{H}$-\textit{subharmonic} on $\Omega$ if \vspace{-1mm}
  $$f(a)\leqslant\int_{\mathbb{S}^{n-1}}f(\varphi_a(rt))\dd\sigma(t)\vspace{-1mm}$$
  for all $a\in\Omega$ and all $r$ sufficiently small.
\end{defin}
\begin{theorem}[Theorem 4.6.3 \cite{stoll}]
  If $f$ is a $\mathcal{H}$-subharmonic on $\mathbb{B}^n$, then there exsits a unique regular Borel measure $\mu_f$ on $\mathbb{B}^n$ such that\vspace{-1mm}
  \begin{equation}\label{weak-hyp}
  \int_{\mathbb{B}^n}\psi\dd\mu_f=\int_{\mathbb{B}^n}f\Delta_h\psi\dd\tau\vspace{-1mm}
  \end{equation}
  for all $f\in C_c^2(\mathbb{B}^n)$.
\end{theorem}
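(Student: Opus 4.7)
The goal is to produce a positive linear functional on test functions and then invoke the Riesz representation theorem. Observe first that the identity in the statement should read ``for all $\psi\in C_c^2(\mathbb{B}^n)$'': indeed $f$ is only upper semicontinuous, so the plan is to define
$$L(\psi)=\int_{\mathbb{B}^n} f\,\Delta_h \psi\,\dd\tau,\qquad \psi\in C_c^2(\mathbb{B}^n),$$
which makes sense because $f$ is bounded above on each compact subset of $\mathbb{B}^n$ and also bounded below on any compact set outside the polar set $\{f=-\infty\}$ (which has Lebesgue measure zero for an $\mathcal{H}$-subharmonic function that is not identically $-\infty$).

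The central step is to show that $L$ is a \emph{positive} functional, i.e.\ $L(\psi)\ge0$ whenever $\psi\ge0$. I would proceed by mollification adapted to the hyperbolic structure. Using a nonnegative radial cut-off $\chi\in C_c^\infty(\mathbb{B}^n)$ of total $\tau$-mass one, define the hyperbolic averages
$$f_\varepsilon(x)=\int_{\mathbb{B}^n} f(\varphi_x(\varepsilon z))\,\chi(z)\,\dd\tau(z).$$
Because $\Delta_h$ commutes with Möbius transformations (equation \eqref{hLapInv}) and $f$ is $\mathcal{H}$-subharmonic, standard arguments (cf.\ Stoll \cite{stoll}) give that $f_\varepsilon\in C^\infty(\mathbb{B}^n_\varepsilon)$, $f_\varepsilon$ is $\mathcal{H}$-subharmonic, $f_\varepsilon\downarrow f$ as $\varepsilon\downarrow 0$, and, crucially, that $\Delta_h f_\varepsilon\ge 0$ pointwise on its domain of definition. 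An application of the symmetry of $\Delta_h$ with respect to $\dd\tau$ (two integrations by parts against $\psi\in C_c^2$, which is permitted because the conformal factor cancels, giving Green's identity in the form $\int f_\varepsilon\Delta_h\psi\,\dd\tau=\int(\Delta_h f_\varepsilon)\psi\,\dd\tau$) yields $\int f_\varepsilon\Delta_h\psi\,\dd\tau\ge 0$. Sending $\varepsilon\to 0$, monotone (or dominated, since on $\supp\psi$ the functions $f_\varepsilon$ are dominated by $f_1$ above and by some integrable lower bound) convergence gives $L(\psi)\ge 0$.

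Once positivity is established, I would extend $L$ to a positive linear functional on all of $C_c(\mathbb{B}^n)$. This uses the fact that $C_c^2(\mathbb{B}^n)$ is dense in $C_c(\mathbb{B}^n)$ in the inductive-limit topology: given $\eta\in C_c(\mathbb{B}^n)$ with $\eta\ge 0$ and supported in a compact $K\subset\mathbb{B}^n$, one sandwiches $\eta$ between nonnegative $C_c^2$ functions supported in a slightly larger compact set and uniformly close to $\eta$; the positivity of $L$ on $C_c^2$ forces the extension to be well defined, positive, and linear. The Riesz representation theorem then produces a unique regular Borel measure $\mu_f$ on $\mathbb{B}^n$ with $L(\psi)=\int\psi\,\dd\mu_f$ for all $\psi\in C_c(\mathbb{B}^n)$, which is \eqref{weak-hyp}.

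The main obstacle is the construction of the smoothing $f_\varepsilon$ and verifying all three of its properties ($C^\infty$, $\mathcal{H}$-subharmonic, and monotone decrease to $f$) in the hyperbolic setting. The Möbius invariance of $\Delta_h$ does the bulk of the work: averaging $f$ over orbits of small hyperbolic balls preserves subharmonicity because the defining submean inequality of $\mathcal{H}$-subharmonicity is itself phrased through $\varphi_a$. Uniqueness of $\mu_f$ is automatic from the Riesz representation theorem, since two regular Borel measures agreeing on $C_c(\mathbb{B}^n)$ must coincide.
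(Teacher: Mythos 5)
You have correctly spotted the typo: the quantifier in \eqref{weak-hyp} should read ``for all $\psi\in C_c^2(\mathbb{B}^n)$''. Note that the paper itself offers no proof of this statement --- it is quoted verbatim as Theorem 4.6.3 of Stoll \cite{stoll} --- so the only benchmark is the reference, and your argument is essentially Stoll's: show the functional $L(\psi)=\int_{\mathbb{B}^n}f\,\Delta_h\psi\,\dd\tau$ is positive by approximating $f$ from above by smooth $\mathcal{H}$-subharmonic functions (for which $\Delta_h f_\varepsilon\geq 0$ and Green's identity for the invariant Laplacian applies), then invoke the Riesz representation theorem. That is the right route, and all the ingredients you appeal to (invariant mollification, monotone decrease of the hyperbolic spherical means, self-adjointness of $\Delta_h$ with respect to $\dd\tau$) are available in \cite{stoll}, Sections 4.1--4.4.

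One justification as written is false and should be replaced. You claim $L(\psi)$ is well defined because $f$ is ``bounded below on any compact set outside the polar set $\{f=-\infty\}$''. A subharmonic function need not be bounded below on a compact set minus its $-\infty$ set (take $\log|x-a|$ near $a$). The correct fact, and the one you actually need, is that an $\mathcal{H}$-subharmonic $f\not\equiv-\infty$ belongs to $L^1_{\mathrm{loc}}(\mathbb{B}^n,\dd\tau)$ (Stoll, Section 4.3). This same fact is what supplies the ``integrable lower bound'' in your dominated-convergence step, where you cannot use monotone convergence directly because $\Delta_h\psi$ changes sign; the domination $f\leq f_\varepsilon\leq f_{\varepsilon_0}$ on $\supp\psi$ together with $f\in L^1_{\mathrm{loc}}(\dd\tau)$ closes that step. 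Two further small points: the invariant convolution is more cleanly written as $\int f(\varphi_x(z))\chi_\varepsilon(z)\,\dd\tau(z)$ with $\chi_\varepsilon$ supported in a small hyperbolic ball of unit $\tau$-mass (your $\varphi_x(\varepsilon z)$ with a fixed $\chi$ does not quite normalize); and for uniqueness you should note that the constructed measure is finite on compacts, so that agreement of two such measures on $C_c^2$ extends by density to $C_c$ and hence forces equality. With these repairs the proof is complete and coincides with the standard one.
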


\begin{defin}[Definition 4.6.4 \cite{stoll}]
  If $f$ is a $\mathcal{H}$-subharmonic on $\mathbb{B}^n$, the unque regular Borel measure $\mu_f$ satisfying $(\ref{weak-hyp})$ is said to be the \textbf{Riesz measure} of $f$.
\end{defin}
\begin{defin}
  As in the Euclidean case, for $\zeta\in\mathbb{S}^{n-1}$ and $\alpha>1$, we denote by $\Gamma_\alpha(\zeta)$ the \textit{non-tangental approach region} at $\zeta$ defined by\vspace{-1mm}
  $$\Gamma_\alpha(\zeta)=\{y\in\mathbb{B}^n:|y-\zeta|<\alpha(1-|y|)\}.\vspace{-1mm}$$
\end{defin}
\begin{theorem}[Theorem 8.3.3 \cite{stoll}]
  \begin{itemize}
  \item[]
    \item[(a)] If $f\in L_1(\mathbb{S}^{n-1})$, then for every $\alpha>1$,\vspace{-1mm}
        $$\lim\limits_{\substack{x\to \zeta,\\ x\in\Gamma_\alpha(\zeta)}}P_h[f](x)=f(\zeta)\mbox{ $\sigma$-a.e. on } \mathbb{S}^{n-1}.\vspace{-1mm}$$
    \item[(b)] If $\nu$ is a signed Borel measure on $\mathbb{S}^{n-1}$ which is singular w.r.t $\sigma$, then for every $\alpha>1$,\vspace{-1mm}
        $$\lim\limits_{\substack{x\to \zeta,\\ x\in\Gamma_\alpha(\zeta)}}P_h[f](\nu)=0 \mbox{ $\sigma$-a.e. on } \mathbb{S}^{n-1}.\vspace{-1mm}$$
  \end{itemize}
\end{theorem}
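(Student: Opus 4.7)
The plan is to deduce both parts from a single ingredient: the non-tangential maximal function of $P_h[f]$ (resp.\ $P_h[\nu]$) is controlled pointwise by a Hardy--Littlewood maximal function on the sphere. Once this is in hand, a classical Lebesgue differentiation argument on the doubling metric measure space $(\mathbb{S}^{n-1},\sigma)$ produces the stated $\sigma$-a.e.\ limits.

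First I would fix $\alpha>1$ and establish, for $x\in\Gamma_\alpha(\zeta)$ and $t\in\mathbb{S}^{n-1}$, the elementary geometric bound $|t-x|\gtrsim |t-\zeta|+(1-|x|)$, together with $1-|x|^2\asymp 1-|x|$, where the implicit constants depend only on $\alpha$. Substituting these into
$$P_h(x,t)=\left(\frac{1-|x|^2}{|t-x|^2}\right)^{n-1}$$
and partitioning $\mathbb{S}^{n-1}$ into a central cap $A_0=\{t:|t-\zeta|<(1-|x|)\}$ and dyadic annuli $A_k=\{t:2^{k-1}(1-|x|)\leqslant |t-\zeta|<2^k(1-|x|)\}$ for $k\geqslant 1$, one obtains on $A_k$ the bound $P_h(x,t)\lesssim 2^{-2k(n-1)}(1-|x|)^{-(n-1)}$, while $\sigma(A_k)\lesssim (2^k(1-|x|))^{n-1}$. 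Averaging $|f|$ over $A_k$ against the spherical Hardy--Littlewood maximal function $M_sf(\zeta)$ gives an $A_k$-contribution of order $2^{-k(n-1)}M_sf(\zeta)$, and the geometric series in $k$ collapses to produce
$$\sup_{x\in\Gamma_\alpha(\zeta)}|P_h[f](x)|\lesssim_{n,\alpha} M_sf(\zeta),$$
together with an entirely analogous inequality when $f\,d\sigma$ is replaced by a finite Borel measure $\nu$.

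Knowing $M_s$ is weak type $(1,1)$ with respect to $\sigma$, part (a) follows by a standard density argument. For continuous $g$, uniform continuity together with $\int_{\mathbb{S}^{n-1}}P_h(x,t)\,d\sigma(t)=1$ gives $P_h[g](x)\to g(\zeta)$ as $x\to\zeta$ within $\Gamma_\alpha(\zeta)$, uniformly in $\zeta$. Given $f\in L^1(\mathbb{S}^{n-1})$ and $\varepsilon>0$, split $f=g+h$ with $g\in C(\mathbb{S}^{n-1})$ and $\|h\|_1<\varepsilon$; then the set of $\zeta$ where the non-tangential limsup of $|P_h[f](x)-f(\zeta)|$ exceeds $\lambda$ has $\sigma$-measure $\lesssim \varepsilon/\lambda$ by the maximal bound, forcing it to be null as $\varepsilon\to 0$. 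Part (b) is proved similarly: the same kernel estimate gives non-tangential domination of $|P_h[\nu](x)|$ by a maximal function of $|\nu|$; since $\nu\perp\sigma$, the symmetric derivative of $|\nu|$ with respect to $\sigma$ vanishes $\sigma$-a.e.\ by the differentiation theorem on the doubling space $(\mathbb{S}^{n-1},\sigma)$, and approximating $\nu$ in total variation by its absolutely continuous truncations (combined with (a)) yields non-tangential limit zero $\sigma$-a.e.

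The main obstacle, and the only place where hyperbolic-specific input really matters, is verifying the kernel-to-maximal-function bound. One must check that the exponent $n-1$ appearing on $1-|x|^2$ in the Poisson--Szeg\H{o} kernel is sufficient to make the geometric series arising from the dyadic decomposition sum to a constant depending only on $n$ and $\alpha$. Everything else is a transcription of the standard Euclidean Fatou theorem to the homogeneous-type space $(\mathbb{S}^{n-1},\sigma)$.
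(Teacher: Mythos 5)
This statement is imported verbatim from Stoll's book (Theorem 8.3.3) and the paper gives no proof of it, so there is no in-paper argument to compare against; your proposal has to stand on its own. For part (a) it does: the chain consisting of the geometric inequality $|t-x|\gtrsim_\alpha |t-\zeta|+(1-|x|)$ on $\Gamma_\alpha(\zeta)$, the dyadic cap decomposition yielding $\sup_{x\in\Gamma_\alpha(\zeta)}P_h[|f|](x)\lesssim_{n,\alpha}M_sf(\zeta)$, the weak $(1,1)$ bound for the Hardy--Littlewood maximal operator on the doubling space $(\mathbb{S}^{n-1},\sigma)$, and the density argument through continuous functions is the standard Fatou-type proof and is correct; it is also essentially the route taken in Stoll's text.

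Part (b), however, contains a step that fails as written. You propose to ``approximate $\nu$ in total variation by its absolutely continuous truncations.'' If $\nu\perp\sigma$ and $\mu\ll\sigma$, then $\|\nu-\mu\|=\|\nu\|+\|\mu\|$, so no such approximation exists; and the global bound $\sup_{x\in\Gamma_\alpha(\zeta)}|P_h[\nu](x)|\lesssim M_s|\nu|(\zeta)$ by itself only shows the nontangential limsup is finite $\sigma$-a.e., not that it vanishes. The repair uses an ingredient you already name, the vanishing of the symmetric derivative: for $\sigma$-a.e.\ $\zeta$ one has $|\nu|(B(\zeta,r))/\sigma(B(\zeta,r))\to 0$ as $r\to 0$. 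Fix such a $\zeta$ and $\varepsilon>0$, choose $\delta$ so that $|\nu|(B(\zeta,r))\leqslant\varepsilon\,\sigma(B(\zeta,r))$ for all $r\leqslant 2\delta$, and split $\nu$ into its restrictions to $B(\zeta,\delta)$ and to the complement. The far piece tends to $0$ as $x\to\zeta$ because the kernel decays uniformly off any neighbourhood of $\zeta$. For the near piece, rerun your dyadic decomposition: only those annuli with $2^{k}(1-|x|)\lesssim\delta$ meet $B(\zeta,\delta)$, every cap that occurs has radius at most $2\delta$, so each annulus contributes at most a constant times $2^{-k(n-1)}\varepsilon$ and the geometric series gives a bound $\lesssim\varepsilon$ for all $x\in\Gamma_\alpha(\zeta)$ with $1-|x|$ small. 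Letting $\varepsilon\to 0$ yields nontangential limit $0$. With this localization replacing the total-variation approximation, the argument is complete.
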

We say that  a positive measure  $\mu$  satisfies integrability condition if
\begin{equation}\label{intCondMu}
  \int_{\mathbb{B}^{n}}(1-|y|^{2})^{n-1} d\mu(y) < \infty.
\end{equation}

We recall that Green potential of regular Borel measure $\mu$ on $\mathbb{B}^n$ is defined as (see \cite{stoll}) $G_{\mu}(x)=\int_{\mathbb{B}^n}G_h(x,y)\dd\mu(y)$. In \cite{stoll} it is proved that $G_\mu(x)\not\equiv+\infty$ if and only if $(\ref{intCondMu})$ holds.

\begin{propo}[Corrolary 4.1.5 \cite{stoll}]\label{gr-pot-psi}
If $f\in C_c^2(\mathbb{B}^n)$, then for all $a\in\mathbb{B}^n$,\vspace{-1mm}
$$f(a)=-\int_{\mathbb{B}^n}G_h(a,x)\Delta_h f(x)\dd\tau(x).$$
\end{propo}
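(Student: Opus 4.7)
\noindent\emph{Proof plan.} I would derive the identity as a Green's representation for $\Delta_h$ in the ball model, using three ingredients: M\"obius invariance, spherical symmetrization, and the divergence form of $\Delta_h$.

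First, by M\"obius invariance I would reduce to the case $a=0$. Choose $\varphi\in\widehat M(\mathbb B^n)$ with $\varphi(0)=a$ and set $F:=f\circ\varphi\in C_c^2(\mathbb B^n)$. Then $F(0)=f(a)$, $\Delta_h F=(\Delta_h f)\circ\varphi$ by \eqref{hLapInv}, and both $G_h(\cdot,\cdot)$ and $\dd\tau$ are preserved by $\varphi$. Hence the change of variable $y=\varphi(x)$ converts the target identity at $a$ for $f$ into the identity at $0$ for $F$, so it suffices to prove
$$f(0)=-\int_{\mathbb B^n} g(|x|)\,\Delta_h f(x)\,\dd\tau(x).$$

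Next I would reduce this to a one-dimensional statement. Since $\Delta_h$ commutes with the $O(n)$-action, the spherical mean $\bar f(r):=\int_{\mathbb S^{n-1}} f(r\xi)\,\dd\sigma(\xi)$ satisfies $\int_{\mathbb S^{n-1}}\Delta_h f(r\xi)\dd\sigma(\xi)=\Delta_h\bar f(r)$, where on the right $\Delta_h$ acts on a radial function. Writing $\dd\tau$ in polar coordinates then leaves only an integral in $r$. The decisive algebraic observation is that $\Delta_h$ has the divergence form
$$\Delta_h F=(1-|x|^2)^n\,\nabla\!\cdot\!\bigl((1-|x|^2)^{-(n-2)}\nabla F\bigr),$$
which for radial $F$ reads $\Delta_h F(r)\,r^{n-1}(1-r^2)^{-n}=\dfrac{d}{dr}\!\bigl(r^{n-1}F'(r)(1-r^2)^{-(n-2)}\bigr)$: the integrand becomes a perfect derivative.

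I would then excise $B(0,\varepsilon)$ and integrate by parts on $(\varepsilon,1)$ against $g(r)$. The key identity $g'(r)\,r^{n-1}(1-r^2)^{-(n-2)}\equiv -1$, immediate from the definition of $g$, causes the bulk term to collapse to $\int_\varepsilon^1 \bar f'(r)\,\dd r=-\bar f(\varepsilon)$, since $f$ is compactly supported. The outer boundary term at $r=1$ is zero by compact support. The only non-formal step, and the genuine obstacle, is showing that the inner boundary term at $r=\varepsilon$ vanishes in the limit: I would use $\bar f'(0)=0$ by spherical symmetry, so $\bar f'(r)=O(r)$, combined with $g(r)=O(r^{2-n})$ for $n\geq 3$ and $g(r)=O(\log(1/r))$ for $n=2$, making the inner boundary term $O(\varepsilon^2)$ or $O(\varepsilon^2\log(1/\varepsilon))$, hence $o(1)$. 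Sending $\varepsilon\to 0^+$ then yields $-\bar f(0)=-f(0)$, as required (the normalization constants coming from $\nu(\mathbb B^n)=1$ and $\sigma(\mathbb S^{n-1})=1$ absorb into $g$ and $\dd\tau$ so that the final constant is $-1$).
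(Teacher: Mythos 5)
The paper does not prove this proposition at all: it is quoted verbatim as Corollary~4.1.5 of Stoll's book, so your argument is not competing with an in-paper proof but supplying one. Your route is essentially the classical Green-representation argument and it is sound: the Möbius reduction to $a=0$ is legitimate (both $G_h$, $\dd\tau$ and $\Delta_h$ are invariant, and $f\circ\varphi$ is again in $C_c^2(\mathbb{B}^n)$); the divergence form $\Delta_h F=(1-|x|^2)^n\nabla\cdot\bigl((1-|x|^2)^{-(n-2)}\nabla F\bigr)$ checks out against the definition of $\Delta_h$; the identity $g'(r)r^{n-1}(1-r^2)^{-(n-2)}=\mathrm{const}$ does collapse the bulk term; and the inner boundary term is harmless — in fact $g(\varepsilon)\varepsilon^{n-1}=O(\varepsilon)$ (resp.\ $O(\varepsilon\log(1/\varepsilon))$ for $n=2$) already kills it without invoking $\bar f'(0)=0$, so the step you single out as "the genuine obstacle" is easier than you suggest. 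Compared with simply citing Stoll, your derivation has the virtue of being self-contained and of exposing exactly which structural facts (invariance, divergence form, the ODE satisfied by $g$) make the hyperbolic Green function work.

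The one point you should not wave away is the normalization. With the measures as normalized in the paper ($\nu(\mathbb{B}^n)=1$, $\sigma(\mathbb{S}^{n-1})=1$) one has, for radial $h$,
\begin{equation*}
\int_{\mathbb{B}^n}h\dd\nu=n\int_0^1 h(r)\,r^{n-1}\dd r,
\end{equation*}
and with the paper's displayed $g(r)=\int_r^1(1-t^2)^{n-2}t^{1-n}\dd t$ your computation produces $g'(r)r^{n-1}(1-r^2)^{-(n-2)}=-1$ and hence the value $n\,f(a)$, i.e.\ $f(a)=-\tfrac1n\int G_h(a,x)\Delta_h f(x)\dd\tau(x)$. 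The stated constant $-1$ is only obtained with the Green function normalized as in Stoll and Chen--Huang--Rasila--Wang, namely $g(r)=\tfrac1n\int_r^1(1-t^2)^{n-2}t^{1-n}\dd t$ — a factor the paper's formula (1.5) omits but which the paper implicitly uses later (e.g.\ in the formula $\partial_{x_k}G_h(0,y)=y_k(1-|y|^2)^{n-1}/(n|y|^n)$). So your proof is correct, but the sentence claiming the constants "absorb ... so that the final constant is $-1$" should be replaced by an explicit tracking of this $1/n$; as written it papers over a genuine (if minor) normalization discrepancy in the source.
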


\begin{theorem}[Theorem 9.4.1 \cite{stoll}] Let $G_\mu$ be the Green potential of a
measure $\mu$  satisfying $(\ref{intCondMu})$. Then
$$\lim\limits_{r\rightarrow 1} G_\mu(rt) = 0 \mbox{ for almost every } t \in  \mathbb{S}^{n-1}.$$
\end{theorem}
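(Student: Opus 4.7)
The plan is to combine pointwise vanishing of $G_h(\cdot, y)$ along radii with a truncation of $\mu$ and a spherical-mean estimate for the hyperbolic Green kernel, then upgrade the mean bound to almost-everywhere convergence via a weak-type maximal inequality.

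First, I would record the pointwise fact: for each $y \in \mathbb{B}^n$ and $t \in \mathbb{S}^{n-1}$, identity (\ref{IdentMob2}) gives
$$1 - |T_y(rt)|^2 = \frac{(1-|y|^2)(1-r^2)}{[rt,y]^2},$$
and since $|y| < 1 = |t|$ we have $[rt,y]^2 \to |t-y|^2 > 0$ as $r \to 1$, so $|T_y(rt)| \to 1$ and $G_h(rt,y) = g(|T_y(rt)|) \to 0$. Dominated convergence cannot be applied directly because $G_h(rt, y)$ blows up when $rt$ approaches $y$. Instead, given $\varepsilon > 0$, use hypothesis (\ref{intCondMu}) to pick $r_0 \in (0,1)$ with $\int_{\mathbb{B}^n \setminus \overline{B}(r_0)}(1-|y|^2)^{n-1}\, d\mu(y) < \varepsilon$, and split $\mu = \mu_1 + \mu_2$ with $\mu_1 = \mu|_{\overline{B}(r_0)}$.

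For the compactly-supported part $\mu_1$: when $|x| \geq (1 + r_0)/2$ and $|y| \leq r_0$ one has $[x,y] \geq c(r_0) > 0$, whence $1-|T_y x|^2 \leq C(r_0)(1-|x|^2)$ and $G_h(x,y) \leq C'(r_0)(1-|x|^2)^{n-1}$ uniformly in $y$; thus $G_{\mu_1}(x) \to 0$ uniformly as $|x| \to 1$. For the tail $\mu_2$, the key technical ingredient is the spherical-mean bound
$$M(r, y) := \int_{\mathbb{S}^{n-1}} G_h(rt, y)\, d\sigma(t) \leq C(1 - |y|^2)^{n-1}, \qquad 0 < r < 1,$$
which by rotational symmetry reduces to a one-dimensional angular integral, handled by combining the decay $g(s) \leq C(1-s^2)^{n-1}$ near $s = 1$ with explicit bounds on $[rt, y]^2 = 1 + r^2|y|^2 - 2r(t \cdot y)$ in terms of the angle between $t$ and $y/|y|$ --- essentially the kind of substitution used in Lemmas \ref{IntLem1}--\ref{IntLem2}. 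Granted this bound, Fubini yields $\int_{\mathbb{S}^{n-1}} G_{\mu_2}(rt)\, d\sigma(t) \leq C\varepsilon$ uniformly in $r$.

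To upgrade from the spherical-mean bound to almost-everywhere convergence, I would apply a weak-type $(1,1)$ maximal inequality for the radial maximal function of the lower-semicontinuous nonnegative $\mathcal{H}$-superharmonic function $G_{\mu_2}$ (a standard tool in Hardy-space theory on the hyperbolic ball, cf.\ \cite{stoll}):
$$\sigma\Big\{t \in \mathbb{S}^{n-1} : \sup_{0 < r < 1} G_{\mu_2}(rt) > \lambda\Big\} \leq \frac{C}{\lambda} \int_{\mathbb{B}^n}(1-|y|^2)^{n-1}\, d\mu_2(y) \leq \frac{C\varepsilon}{\lambda}.$$
Combined with the uniform estimate $G_{\mu_1}(rt) \to 0$, this gives $\sigma\{t : \limsup_{r \to 1} G_\mu(rt) > \lambda\} \leq C\varepsilon/\lambda$ for every $\varepsilon, \lambda > 0$, so this set has measure zero for every $\lambda$; as $G_\mu \geq 0$, the full limit $\lim_{r \to 1} G_\mu(rt) = 0$ holds for $\sigma$-a.e.\ $t$. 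The main obstacle is the spherical-mean estimate: when $r$ and $|y|$ are both close to $1$ and $t$ approaches $y/|y|$, $G_h(rt, y)$ is large precisely on a spherical cap that shrinks at a rate coupled to $1 - |y|^2$, and the clean $(1-|y|^2)^{n-1}$ rate arises from a delicate cancellation between the singularity of $g$ and the smallness of that cap --- which is also why (\ref{intCondMu}) is exactly the correct integrability condition.
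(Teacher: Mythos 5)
The paper does not actually prove this statement: it is quoted verbatim as Theorem 9.4.1 of Stoll \cite{stoll} and is used as a black box in the proof of the Dirichlet--problem theorem that follows, so there is no in-paper argument to compare yours against. Judged on its own merits, your outline follows the standard Littlewood-type route (truncation of $\mu$, uniform decay of the compactly supported part, a quantitative kernel estimate for the tail, a weak-type bound to pass to a.e.\ convergence), and the first two ingredients are correct as stated; note only that your spherical-mean bound, which by the radial mean-value property actually equals $g(\max(r,|y|))$, is uniform in $y$ only for $r$ bounded away from $0$ --- harmless here since $\mu_2$ lives on $|y|>r_0\geqslant 1/2$ and only $r\to 1$ matters.

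The one genuine soft spot is the final step. The spherical-mean bound $\int_{\mathbb{S}^{n-1}}G_h(rt,y)\,d\sigma(t)\leqslant C(1-|y|^2)^{n-1}$ does not by itself yield almost-everywhere convergence, because one cannot interchange $\limsup_{r\to1}$ with the integral over the sphere without a dominating function. You patch this by citing a weak-type $(1,1)$ inequality for the radial maximal function of a positive $\mathcal{H}$-superharmonic function; but for the Green-potential part that maximal inequality is essentially equivalent to the theorem being proved (its usual proof passes through the Riesz decomposition and then handles the potential part by exactly the estimate you are missing), so invoking it as a known tool is close to circular. The honest way to close the gap is to strengthen the kernel estimate by putting the supremum \emph{inside} the integral,
$$\int_{\mathbb{S}^{n-1}}\;\sup_{0<r<1}G_h(rt,y)\,d\sigma(t)\;\leqslant\; C\,(1-|y|^2)^{n-1},$$
which follows from the cap decomposition you describe: on the cap $\theta\lesssim 1-|y|$ one has $\inf_r|T_y(rt)|\asymp \theta/\bigl((1-|y|)+\theta\bigr)$ and $g(s)\asymp s^{2-n}$, while off the cap one uses $g(s)\preceq(1-s^2)^{n-1}$ together with (\ref{IdentMob2}). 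Fubini and Chebyshev then give the weak-type bound for $\sup_r G_{\mu_2}(rt)$ with constant $C\varepsilon/\lambda$ directly, and the remainder of your argument (letting $\varepsilon\to0$ for each $\lambda$, then taking a countable union over $\lambda$) is correct.
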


The following theorem can be regarded as a generalisation of the Theorem \ref{ChenRas1} from \cite{ChenRas}, which is Dirichlet problem for hyperbolic Poisson's equation.
\bthm  Suppose  that a Borel nonnegative measure  satisfies the
integrability condition $(\ref{intCondMu})$ and that   $u=P_{h}[\phi]-G_{h}[\mu]$, where
$\phi\in L^1( \mathbb{S}^{n-1})$. Then $(\Delta_h)u =\mu$ in weak sense, and   $$\lim\limits_{r\rightarrow 1}
u(rt) = \phi(t) $$for almost every $t \in  \mathbb{S}^{n-1}$.
\ethm

\begin{proof}
Let $v(x)=-G_\mu(x)$ and $\psi\in C_c^{\infty}(\mathbb{B}^n)$. Then\vspace{-1mm}
  $$\int_{\mathbb{B}^n}v(x)\Delta_h\psi(x)\dd\tau(x)=-\int_{\mathbb{B}^n}\left(\int_{\mathbb{B}^n}G_h(x,y)\dd\mu(y)\right)\Delta_h\psi(x)\dd\tau(x).\vspace{-1mm}$$
  By Fubini theorem, we have that\vspace{-1mm}
  $$\int_{\mathbb{B}^n}v(x)\Delta_h\psi(x)\dd\tau(x)=-\int_{\mathbb{B}^n}\left(\int_{\mathbb{B}^n}G_h(x,y)\Delta_h\psi(x)\dd\tau(x)\right)\dd\mu(y).\vspace{-1mm}$$
  Using Proposition \ref{gr-pot-psi} we get\vspace{-1mm}
  $$\int_{\mathbb{B}^n}v(x)\Delta_h\psi(x)\dd\tau(x)=\int_{\mathbb{B}^n} \psi(y)\dd\mu(y).\vspace{-1mm}$$
  For the second part of this proof use Theorem 8.3.3 subsection 5.9 Theorem 2  and  Theorem 9.4.1
from \cite{stoll}.
\end{proof}

\section{Hyperbolic harmonic functions in the unit ball and local H\"{o}lder continuity}

Firstly, we refer to the paper \cite{mss}, specifically, to the Proposition 5.10. In this proposition we use the following notation: $\sigma_{n-1}$ is the surface area of the sphere $\mathbb{S}^{n-1}$ and $\varphi$ is an angle between radius vector of point $\eta\in\mathbb{S}^{n-1}$ and radius vector of the point $\tilde{x}.$

Let us define $\sigma_*(n)=\frac{\sigma_{n-2}}{\sigma_{n-1}}$. Using formula $\sigma_{n-1}=\frac{2\pi^{n/2}}{\Gamma(\frac{n}{2})}$ we get $\sigma_*(n)=\frac{1}{\sqrt{\pi}}\frac{\Gamma(n/2)}{\Gamma((n-1)/2)}$.

\begin{propo}(Proposition 5.10 \cite{mss})\label{MMThmSphere}
  If $f$ is a function on $\mathbb{S}^{n-1}$ depending only on $\varphi$, then\vspace{-1mm}
  $$\int_{\mathbb{S}^{n-1}}f(\eta)\,\mathrm{d}\sigma(\eta)=\sigma_{n-2}\int_{0}^{\pi}f(\varphi)\sin^{n-2}\varphi\,\mathrm{d}\varphi.$$
\end{propo}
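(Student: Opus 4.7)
The plan is to use the standard spherical-coordinate parameterization of $\mathbb{S}^{n-1}$ adapted to the axis from which the angle $\varphi$ is measured. Fix a unit vector $e_1$ and let $\mathbb{S}^{n-2}$ denote the unit sphere inside the hyperplane $e_1^{\perp}$. Define $\Phi:[0,\pi]\times\mathbb{S}^{n-2}\to\mathbb{S}^{n-1}$ by
$$\Phi(\varphi,\omega)=(\cos\varphi)\,e_1+(\sin\varphi)\,\omega.$$
This map is smooth, surjective, and bijective away from $\varphi\in\{0,\pi\}$, and the excluded set has $\sigma$-measure zero, so it can be discarded in any surface integral.

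Next I would compute the metric induced by $\Phi$ from the Euclidean metric on $\mathbb{R}^n$. The tangent vector $\partial_{\varphi}\Phi=-\sin\varphi\,e_1+\cos\varphi\,\omega$ has unit length and is orthogonal to every tangent vector of the slice $\{\varphi=\mathrm{const}\}$, since tangent directions along $\omega$ lie in $e_1^{\perp}$ and are orthogonal to $\omega$ itself. The slice is a $(n-2)$-sphere of radius $\sin\varphi$, so the embedded metric on it is $\sin^{2}\varphi\cdot g_{\mathbb{S}^{n-2}}$. Hence the pullback metric decomposes orthogonally as $d\varphi^{2}+\sin^{2}\varphi\cdot g_{\mathbb{S}^{n-2}}$, and the associated volume form factors as
$$\Phi^{\ast}\,d\sigma=\sin^{n-2}\varphi\,d\varphi\wedge d\omega,$$
where $d\omega$ is the surface measure on $\mathbb{S}^{n-2}$.

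Finally, since $f\circ\Phi$ depends only on $\varphi$, Fubini's theorem gives
$$\int_{\mathbb{S}^{n-1}}f(\eta)\,d\sigma(\eta)=\int_{0}^{\pi}\!\!\int_{\mathbb{S}^{n-2}}f(\varphi)\sin^{n-2}\varphi\,d\omega\,d\varphi=\sigma_{n-2}\int_{0}^{\pi}f(\varphi)\sin^{n-2}\varphi\,d\varphi,$$
which is the claimed identity. The only step requiring actual work is the Jacobian/metric computation, and this is a classical calculation whose result can equivalently be derived by induction on $n$ from the parameterization $\eta_{1}=\cos\varphi$, $(\eta_{2},\ldots,\eta_{n})=\sin\varphi\cdot\omega$ together with the chain rule. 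As a consistency check, setting $f\equiv1$ reproduces the recursion $\sigma_{n-1}=\sigma_{n-2}\int_{0}^{\pi}\sin^{n-2}\varphi\,d\varphi$ for the surface areas of Euclidean spheres, matching the beta-function identities used immediately after the statement to define $\sigma_{\ast}(n)$.
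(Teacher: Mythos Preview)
The paper does not actually prove this proposition: it is quoted verbatim as Proposition~5.10 from \cite{mss} and used as a black box throughout, so there is no in-paper argument to compare against. Your proof is the standard one and is correct. The parameterization $\Phi(\varphi,\omega)=\cos\varphi\,e_1+\sin\varphi\,\omega$, the orthogonal splitting of the pullback metric as $d\varphi^{2}+\sin^{2}\varphi\,g_{\mathbb{S}^{n-2}}$, and the resulting Jacobian factor $\sin^{n-2}\varphi$ are exactly what one finds in any treatment of integration in spherical coordinates, and Fubini finishes the job since $f$ is constant on each slice.

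One small caveat worth flagging: in this paper $d\sigma$ is declared earlier to be the \emph{normalized} surface measure with $\sigma(\mathbb{S}^{n-1})=1$, whereas the identity in Proposition~\ref{MMThmSphere} is visibly written for the \emph{unnormalized} surface measure (otherwise the factor on the right would be $\sigma_{n-2}/\sigma_{n-1}=\sigma_{*}(n)$, which is indeed what appears every time the authors apply the proposition, e.g.\ in Lemma~\ref{IntLem1}). Your argument is written with the unnormalized measure, matching the proposition as stated; just be aware that the paper silently switches conventions when invoking it.
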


    We will prove the following theorem, which is analogous to \cite[Theorem 6.2]{mss}.

\begin{thm}\label{thmloc0}
Suppose  that $0 < \alpha\leqslant1$,    $\,h\,$ is    a hyperbolic harmonic
mapping from $\mathbb{B}^n$ which is
continuous on  $\overline{{\B}^{n}}$, and\smallskip

\noindent (h1)  let $x_0\in \mathbb{S}^{n-1}$  and   \ $ |h(x)-h(x_0)|
\leqslant M |x-x_0|^\alpha$\  \ for\ \  $x\in \mathbb{S}^{n-1}$.\smallskip

Then there is a constant   $M_n$   such that\vspace{-2mm}
$$(1-r)^{1-\alpha} |h'(r x_0)|\leqslant M_n,\quad 0\leqslant  r < 1.\vspace{-1mm}$$
\end{thm}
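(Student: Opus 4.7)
The plan is to represent $h$ as a hyperbolic Poisson integral of its boundary trace, differentiate inside the integral against $\nabla_x P_h$ paired with $\phi-\phi(x_0)$, and exploit the Hölder estimate (h1) after reducing the sphere integral to a one-variable integral in the angular variable via Proposition~\ref{MMThmSphere} and the scaling $\varphi=(1-r)u$ highlighted in the introduction.

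\emph{Step 1 (Representation).} Since $h\in C(\overline{\mathbb{B}^n})$ is hyperbolic harmonic, the Dirichlet solvability result of Section~2 gives $h=P_h[\phi]$ with $\phi=h|_{\mathbb{S}^{n-1}}$. Replacing $h$ by $h-h(x_0)$ (constants are hyperbolic harmonic) we may assume $\phi(x_0)=0$, so (h1) reads $|\phi(t)|\leqslant M|t-x_0|^\alpha$ for all $t\in\mathbb{S}^{n-1}$. Differentiating $\int_{\mathbb{S}^{n-1}}P_h(x,t)\dd\sigma(t)\equiv 1$ in $x$ shows $\int \nabla_x P_h(x,t)\dd\sigma(t)\equiv 0$, hence
$$h'(x)=\int_{\mathbb{S}^{n-1}}\nabla_x P_h(x,t)\,\phi(t)\,\dd\sigma(t).$$

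\emph{Step 2 (Pointwise bound on $\nabla_x P_h$).} A direct differentiation of $P_h(x,t)=(1-|x|^2)^{n-1}|t-x|^{-2(n-1)}$ produces two terms, the second of which is absorbed by the first via the elementary inequality $1-|x|^2\leqslant 2|t-x|$ (a consequence of $|t-x|\geqslant 1-|x|$ for $t\in\mathbb{S}^{n-1}$). One obtains
$$|\nabla_x P_h(x,t)|\leqslant \frac{C_n(1-|x|^2)^{n-2}}{|t-x|^{2(n-1)}},\qquad x\in\mathbb{B}^n,\ t\in\mathbb{S}^{n-1}.$$

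\emph{Step 3 (Reduction and scaling).} Fix $x=rx_0$. Rotational symmetry makes the integrand depend on $t$ only through the angle $\varphi\in[0,\pi]$ between $t$ and $x_0$; Proposition~\ref{MMThmSphere}, combined with $|t-rx_0|^2=(1-r)^2+4r\sin^2(\varphi/2)$ and $|t-x_0|=2\sin(\varphi/2)$, gives
$$|h'(rx_0)|\leqslant C_n M(1-r)^{n-2}\int_0^\pi\frac{(2\sin(\varphi/2))^\alpha\,\sin^{n-2}\varphi}{\bigl((1-r)^2+4r\sin^2(\varphi/2)\bigr)^{n-1}}\,\dd\varphi.$$
The contribution from $[\pi/2,\pi]$ is bounded independently of $r$ (numerator bounded, denominator bounded below). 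On $[0,\pi/2]$ one uses $\sin(\varphi/2)\asymp\varphi\asymp\sin\varphi$ and the substitution $\varphi=(1-r)u$, which extracts the factor $(1-r)^{\alpha+n-1-2(n-1)}=(1-r)^{\alpha-n+1}$ and leaves the tail integral
$$\int_0^{\pi/(2(1-r))}\frac{u^{\alpha+n-2}}{(c+u^2)^{n-1}}\,\dd u,$$
uniformly bounded in $r$ because the exponent at infinity is $\alpha+n-2-2(n-1)=\alpha-n<0$ (automatic from $\alpha\leqslant 1$ and the relevant range $n\geqslant 3$). Multiplying by the prefactor $(1-r)^{n-2}$ yields $|h'(rx_0)|\leqslant M_n(1-r)^{\alpha-1}$, which is the claim.

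\emph{Main difficulty.} Steps~1 and~2 are essentially mechanical. The real content is the scaling in Step~3: one must track the powers of $1-r$ carefully through the substitution and verify uniform convergence of the residual integral. This is exactly the role the introduction assigns to the substitution $\theta=(1-r)u$, and it is the point where the present proof gains its brevity over the hypergeometric approach of \cite{ChenRas}.
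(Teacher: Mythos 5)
Your proposal is correct and follows essentially the same route as the paper's proof: Poisson representation with the constant subtracted, the gradient bound $|\nabla_x P_h(x,t)|\leqslant C_n(1-|x|^2)^{n-2}|x-t|^{-2(n-1)}$, reduction to the angular variable via Proposition~\ref{MMThmSphere}, and the substitution $\theta=(1-r)u$, with the range of $r$ bounded away from $1$ handled by compactness. One small imprecision: uniform boundedness of the residual integral needs the decay exponent $\alpha-n$ to be $<-1$, not merely $<0$, which does hold here since $\alpha\leqslant 1<n-1$ for $n\geqslant 3$.
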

\begin{proof}
Let  $h_b$ denote  the restriction of $h$ on   $\mathbb{S}^{n-1}$.
Since   $h$ is hyperbolic harmonic on $\mathbb{B}^n$  and continuous on
$\overline{\B^{n}}$, then\vspace{-1mm}
\begin{equation}\label{RepHar1}
h(x)=\int\limits_{\mathbb{S}^{n-1}} P_h(x,\eta) h_b(\eta)
\dd\sigma(\eta)\vspace{-1mm}
\end{equation}
for every $x\in \mathbb{B}^n$. Set  $d:=d(x)= 1  - |x|^2$.  By computation\vspace{-2mm}
$$\partial_{x_k} P_h(x,t)= -2(n-1)\left(\frac{x_k}{|x-t|^2} + d(x)
\frac{x_k-t_k}{|x-t|^4}\right)\left(\frac{1 - |x|^2}{|t - x|^2}\right)^{n-2}.\vspace{-2mm}$$
Hence,  if  $d\leqslant |x-t|$,  then\vspace{-2mm}
\begin{equation}\label{(1)}
 |\partial_{x_k} P_h(x,t)|\leqslant c_1 \frac{(1-|x|^2)^{n-2}}{|x-t|^{2(n-1)}}.\vspace{-2mm}
\end{equation}

Let  $x=r e_n$  and let $\theta$ be the angle between  $t$  and $e_n$.
Then $s:= |x-t|^2= 1 -2r \cos \theta + r^2$ depends only on $\theta$
for fixed  $x$.
Next,  since  $\int\limits_{\mathbb{S}^{n-1}} \partial_k P_h(x,t)
h(e_n) \dd\sigma(t)=0$, we find\vspace{-3mm}
\begin{equation}\label{}
\partial_{x_k}h(x) = \int\limits_{\mathbb{S}^{n-1}} \partial_k P_h(x,t) \big(h(t) -h(e_n)\big)  \dd\sigma(t).\vspace{-2mm}
\end{equation}
Hence by  (\ref{(1)}) and the hypothesis  $(h1)$, we get\vspace{-2mm}
\begin{equation}\label{est.loc0}
|\partial_{x_k}h(x)| \leqslant c_2 (1-|x|^2)^{n-2}\int\limits_{\mathbb{S}^{n-1}}
\frac{|e_n -t|^\alpha}{|x-t|^{2(n-1)}} \dd\sigma(t).\vspace{-2mm}
\end{equation}
Therefore  the   proof of Theorem  \ref{thmloc0} is reduced to the
proof of the following proposition.
\end{proof}

\begin{prop}\label{prop2a}
Suppose  that $n\geqslant 3$ and $0 <\alpha\leqslant 1$  and $x=re_n$ for $0<r<1$. Then\vspace{-1mm}
$$I_\alpha(r e_n):= (1-|x|^2)^{n-2}\int\limits_{\mathbb{S}^{n-1}} \frac{|e_n -t|^\alpha}{|x-t|^{2(n-1)}}
\dd\sigma(t)\leqslant c\cdot \frac 1{(1-r)^{1-\alpha}},\vspace{-2mm}
$$
where   $c=c(\alpha,n)$  is a positive constant which depends only
on  $n$ and $\alpha$.
\end{prop}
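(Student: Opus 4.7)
The plan is to exploit rotational symmetry and reduce the spherical integral defining $I_\alpha(re_n)$ to a one-dimensional one. Since both $|e_n - t|$ and $|x - t|$ depend only on the angle $\theta$ between $t$ and $e_n$, Proposition~\ref{MMThmSphere} combined with the half-angle identities $|e_n - t| = 2\sin(\theta/2)$ and $|x-t|^2 = (1-r)^2 + 4r\sin^2(\theta/2)$ yields
$$I_\alpha(re_n) = \sigma_{n-2}(1-r^2)^{n-2}\int_0^\pi \frac{\bigl(2\sin(\theta/2)\bigr)^{\alpha}\,\sin^{n-2}\theta}{\bigl((1-r)^2 + 4r\sin^2(\theta/2)\bigr)^{n-1}}\dd\theta.$$

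Next I would replace the trigonometric factors by polynomial ones via the elementary estimates $\sin\theta \leq \theta$, $2\sin(\theta/2) \leq \theta$, and $\sin(\theta/2) \geq \theta/\pi$ on $[0,\pi]$, enlarge the integration domain to $[0,\infty)$, and then apply the substitution $\theta = (1-r)u$ foreshadowed in the introduction. A direct rescaling reduces the task to bounding
$$(1-r^2)^{n-2}(1-r)^{\alpha - n + 1}\int_0^\infty \frac{u^{n-2+\alpha}}{\bigl(1 + \tfrac{4r}{\pi^2}u^2\bigr)^{n-1}}\dd u.$$

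It remains to control the scale-free integral in $u$. Convergence at infinity is guaranteed precisely by the hypothesis $\alpha \leq 1 < n - 1$, which forces $u^{\alpha - n}$ to be integrable; near $0$ the integrand behaves like $u^{n-2+\alpha}$, also integrable. For $r \in [1/2, 1)$ the coefficient $4r/\pi^2$ is bounded below by $2/\pi^2$, so the $u$-integral is uniformly bounded, and combining with $(1-r^2)^{n-2} \leq 2^{n-2}(1-r)^{n-2}$ gives the announced $I_\alpha(re_n) \leq c/(1-r)^{1-\alpha}$. For $r \in [0, 1/2]$ the inequality $|x - t| \geq 1 - r \geq 1/2$ in the original integral gives a trivial uniform bound that is itself dominated by $c/(1-r)^{1-\alpha}$. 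The main subtlety is the bookkeeping: the concentration of mass near $\theta = 0$ as $r \to 1$, the prefactor $(1-r^2)^{n-2}$, and the factor $(1-r)^{\alpha - n + 1}$ produced by the substitution must combine exactly to yield the sharp exponent $\alpha - 1$, which is why the substitution $\theta = (1-r)u$ is the natural one.
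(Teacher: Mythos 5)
Your proposal is correct and follows essentially the same route as the paper: reduction to a one--dimensional integral via Proposition~\ref{MMThmSphere}, the elementary bounds $2\sin(\theta/2)\leqslant\theta$ and $\sin(\theta/2)\geqslant\theta/\pi$, extension of the integral to $[0,\infty)$, and the substitution $\theta=(1-r)u$, with convergence at infinity secured by $\alpha\leqslant 1<n-1$. The only (immaterial) difference is the treatment of $0\leqslant r\leqslant 1/2$, where you use the pointwise bound $|x-t|\geqslant 1-r\geqslant 1/2$ instead of the paper's compactness argument.
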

\smallskip

Using similar  approach,  if  $\omega$ is a majorant and $\delta_r=1-r^2$, one can prove\vspace{-1mm}
$$I_\omega(r e_n):= (1-|x|^2)^{n-2}\int\limits_{\mathbb{S}^{n-1}} \frac{\omega(|e_n -t|)}{|x-t|^{2(n-1)}}
\dd\sigma(t)\leqslant c\cdot \frac {\omega(\delta_r)} {\delta_r}.\vspace{-1mm}
$$

\begin{proof}
We use        spherical cups    $S^{\theta}$  defined  by  $t_n  >
\cos \theta$  and  integration with parts. Since for a fixed
$\theta\in [0,\pi]$, $|e_n -t|\leqslant \theta$  for  $t\in
S^{\theta}$, by  an application of    Proposition \ref{MMThmSphere} to
$f(t)=\frac{|e_n -t|^\alpha}{|x-t|^{2(n-1)}}$, we get
\vspace{-4mm}
\begin{eqnarray}\label{eqH4}
I_\alpha(r e_n) \leqslant c_3 (1-|x|^2)^{n-2}\int\limits_0^\pi \frac
{|\theta|^{n-2}
|\theta|^{\alpha}} {((1-r)^2+\frac{4r}{\pi^2}\theta^2)^{n-1}}\,d\theta \nonumber \\
\label{eqH5} <c_4(1-|x|^2)^{n-2}\int\limits_0^{\infty}\frac {\theta^{\alpha +n-2}}
{\left((1-r)^2+\frac{4r}{\pi^2}\, \theta^2\right)^{n-1}}\,d\theta\,.
\end{eqnarray}
Next using  $(1+\frac{4r}{\pi^2} u^2)^{-1} \leqslant c_5
(1+u^2)^{-1}$  for  $\frac 12 \leqslant r <1$  and  a  change of
variables \vspace{-1mm}
\begin{equation}\label{priv}
\theta=(1-r) u,\vspace{-2mm}
\end{equation}
  we find
\begin{equation}
I_\alpha(r e_n) \leqslant c_6 {(1-r)^{\alpha -1
}}\int\limits_0^{\infty}\frac{u^{\alpha +n-2}}{(1+u^2)^{n-2}}\,d
    u.\vspace{-2mm}
\end{equation}
Denote by $J(\alpha)$   the last expression  on the right hand side
of the previous formula. Since  $g(u)= \frac{u^{\alpha
+n-2}}{(1+u^2)^{n-1}}\sim u^{\alpha-n}$ for   $u\rightarrow  +
\infty$ and  by  hypothesis   $0 <\alpha\leqslant1$  (and therefore
$\alpha-n <-1$),  the integral $J(\alpha)$  converges  and we have
\vspace{-2mm}
\begin{equation}\label{IntEst1}
  I_\alpha(r e_n) \leqslant c_7 (1-r)^{\alpha -1}\quad \mbox{for}\quad  \frac
12 \leqslant r <1.\vspace{-2mm}
\end{equation}


$(1-r)^{1-\alpha} A(r)$ is continuous on  $[0,1/2]$ and attains a
maximum $c_8$, that is
\vspace{-2mm}
\begin{equation}\label{IntEst2}
    I_\alpha(r e_n) \leqslant c_9 (1-r)^{\alpha -1}\quad \mbox{for}\quad 0
\leqslant r \leqslant  \frac 12, \vspace{-2mm}
\end{equation}
where  $c_9= c_3c_8 $.
Hence  from (\ref{IntEst1}) and (\ref{IntEst2}) with $c=\max \{c_7,c_9 \}$  the proof of Proposition follows.
\end{proof}

On this point, it is interesting to observe that case $\alpha=1$, in other words, Lipschitz continuity of boundary function, does not imply Lipschitz continuity of its Euclidean harmonic extension. However, in the case of hyperbolic harmonic extension, under the assumptions of Theorem \ref{thmloc0}, we can conclude that, when $\alpha=1$, the statement of this Theorem holds.

\medskip

Combining Proposition~\ref{prop2a} and (\ref{est.loc0}) we get the proof of Theorem.

\medskip



\section{Growth of partial derivatives for the hyperbolic Green potential}

In this section, we consider H\"{o}lder continuity and Lipschitz continuity of hyperbolic Green potential on the unit ball in $\mathbb{R}^n$.

We need the following statement.
\vspace{-3mm}
\begin{lem}\label{IntLem1}
  Let $n\geqslant 3$ and $r=|x|$, where $0<r<1$. If\vspace{-1mm} $$A(r,\rho)=\int_{\Sp^{n-1}}\frac{\mathrm{d}\sigma(\xi)}{\sqrt{1-2\rho\langle x,\xi\rangle+\rho^2|x|^2}}\quad \mbox{and}\quad
  B(r,\rho)=\int_{\Sp^{n-1}}\frac{\mathrm{d}\sigma(\xi)}{1-2\rho\langle x,\xi\rangle+\rho^2|x|^2},\vspace{-1mm}$$
  then we have the following conclusions.
  \begin{itemize}
  \item[(i)] There exists $C_1, C_2>0$ such that for all $0<\rho<1$ and $1/2<r<1$ we have that
  $A(r,\rho)\leqslant \frac{C_1(n)}{\sqrt{\rho}}$ if $n\geqslant 3$ and $B(r,\rho)\leqslant \frac{C_2(n)}{\rho}$  if $n>3$.
  \item[(ii)] If $n=3$ then there exist $C_3, C_4>0$ such that $B(r,\rho)\leqslant C_3-C_4\log(1-\rho)$ for $0<\rho<1$ and $1/2<r<1$.
  \end{itemize}
\end{lem}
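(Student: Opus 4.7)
My plan is to reduce both sphere integrals to one-dimensional integrals and then extract the dependence on $\rho$ by an explicit substitution, paralleling the approach used in the proof of Proposition~\ref{prop2a}. By rotational invariance of $\dd\sigma$ I may take $x = re_n$; letting $\varphi$ denote the angle between $\xi$ and $e_n$ and setting $a := r\rho \in (0,1)$, the quadratic in the denominators becomes $1 - 2a\cos\varphi + a^2$. Applying Proposition~\ref{MMThmSphere} together with the change of variable $t = \cos\varphi$ rewrites
$$A(r,\rho) = \sigma_*(n)\int_{-1}^{1} \frac{(1-t^2)^{(n-3)/2}}{\sqrt{1-2at+a^2}}\,\dd t, \qquad B(r,\rho) = \sigma_*(n)\int_{-1}^{1} \frac{(1-t^2)^{(n-3)/2}}{1-2at+a^2}\,\dd t.$$

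The crucial algebraic identity is $1-2at+a^2 = (1-a)^2 + 2a(1-t)$, which isolates the near-boundary singularity at $t = 1$ as $a \to 1$. Substituting $u = 1-t$ followed by the scaling $v = 2au/(1-a)^2$ transports each integral onto $[0,V]$ with $V := 4a/(1-a)^2$, and bounding $(1-t^2)^{(n-3)/2} \leq 2^{(n-3)/2} u^{(n-3)/2}$ produces the dimensionless integrals $\int_0^V v^{(n-3)/2}/\sqrt{1+v}\,\dd v$ and $\int_0^V v^{(n-3)/2}/(1+v)\,\dd v$, multiplied by the prefactors $(1-a)^{n-2}/(2a)^{(n-1)/2}$ and $(1-a)^{n-3}/(2a)^{(n-1)/2}$ respectively.

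I would then split into two regimes. When $a \leq 1/2$ the upper limit $V$ is bounded, and both reduced integrals are $O(1)$, so $A$ and $B$ are uniformly bounded by constants depending only on $n$. When $a > 1/2$, estimating the tail of each integral gives growth of order $V^{(n-2)/2}$ for the $A$-integrand and, provided $n > 3$, of order $V^{(n-3)/2}$ for the $B$-integrand. Substituting $V = 4a/(1-a)^2$ back causes the factors of $(1-a)$ to cancel cleanly, leaving $A \leq C(n)/\sqrt{a}$ and (for $n > 3$) $B \leq C(n)/a$; since the hypothesis $r > 1/2$ forces $a > \rho/2$, this yields the bounds claimed in (i).

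The case $n = 3$ in (ii) is precisely where the preceding cancellation mechanism breaks down: the $B$-integrand collapses to $1/(1+v)$, whose integral over $[0,V]$ is $\log(1+V)$ rather than a power. Here direct integration in fact gives $B(r,\rho) = (\sigma_*(3)/a)\log\bigl((1+a)/(1-a)\bigr)$, and using the monotonicity $1-a \geq 1-\rho$ together with $1/a \leq 2$ in the relevant regime yields the advertised logarithmic bound. The main technical inconvenience I expect is the bookkeeping needed to join the two regimes $a \leq 1/2$ and $a > 1/2$ into a single constant in the final statement; this is routine but requires some care, resolved by taking the maximum of the two bounds.
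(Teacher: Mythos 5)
Your argument is correct and rests on the same key reduction as the paper, namely Proposition~\ref{MMThmSphere}, but the subsequent estimates are executed differently. For part (i) the paper simply discards the term $(1-\rho r)^2$ from the denominator and uses $\sin(\theta/2)\geqslant \theta/\pi$, which immediately yields $A(r,\rho)\leqslant \widetilde{C}_1(\rho r)^{-1/2}\int_0^\pi\theta^{n-3}\,\dd\theta$ and the analogous bound for $B$; the whole point there is that the remaining $\theta$-integrals converge precisely when $n\geqslant 3$, respectively $n>3$. Your version keeps both terms via the identity $1-2at+a^2=(1-a)^2+2a(1-t)$ and rescales, which is more systematic and explains structurally why $n=3$ is the threshold for $B$ (the reduced integrand degenerates to $1/(1+v)$); for part (ii) you even obtain the exact value $B=\sigma_*(3)\,a^{-1}\log\frac{1+a}{1-a}$, whereas the paper substitutes $\theta=(1-\rho)u$ and splits the integral. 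One small point to tighten: in the regime $a\leqslant 1/2$, the observation that the reduced integrals are $O(1)$ does not by itself bound $A$ and $B$, since your prefactor $(2a)^{-(n-1)/2}$ blows up as $a\to 0$; you need either the sharper estimate $\int_0^V v^{(n-3)/2}(1+v)^{-1/2}\,\dd v\leqslant C\,V^{(n-1)/2}$, whose substitution cancels the offending power of $a$, or simply the trivial remark that $1-2at+a^2\geqslant (1-a)^2\geqslant 1/4$ there. With that repaired, the two regimes join as you describe and the proof is complete.
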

\bpf
By using Proposition \ref{MMThmSphere}, we can check that\vspace{-1mm}
$$A(r,\rho){=}\sigma_*(n)\int_{0}^{
\pi}\frac{\sin^{n{-}2}\theta}{\sqrt{(1{-}\rho r)^2{+}4\rho r\sin^2\frac{\theta}{2}}}\,\mathrm{d}\theta,\quad
B(r,\rho){=}\sigma_*(n)\int_{0}^{
\pi}\frac{\sin^{n{-}2}\theta}{(1{-}\rho r)^2{+}4\rho r\sin^2\frac{\theta}{2}}\,\mathrm{d}\theta,\vspace{-2mm}$$
where $\theta$ is an angle between vectors $x$ and $\xi$. Using $\sin x\geqslant \frac{2}{\pi}x$ for  $x\in (0,\frac{\pi}{2})$, we get\vspace{-2mm}
$$A(r,\rho)\leqslant \frac{\widetilde{C}_1}{\sqrt{\rho r}}\int_{0}^{\pi}\frac{\theta^{n-2}}{\theta}\,\mathrm{d}\theta,\quad B(r,\rho)\leqslant\frac{\widetilde{C}_2}{\rho r}\int_{0}^{\pi}\frac{\theta^{n-2}}{\theta^2}\,\mathrm{d}\theta.\vspace{-1mm}$$
The first and the second integral converges in case $n\geqslant 3$ and $n\geqslant 4$, respectively, which gives us $(i)$.
In order to prove $(ii)$ let us assume $n=3$. Then, we have \vspace{-1mm}
$$B(r,\rho)\leqslant\sigma_*(3)\int_{0}^{\pi}\frac{\theta\,\mathrm{d}\theta}{(1-\rho)^2+\frac{4r\rho}{\pi^2}\theta^2}.\vspace{-1mm}$$
In the case of $0<\rho<1/2$ we have $B(r,\rho)\leqslant 2\pi^2\sigma_*(3)$. Let $1/2<r$, $\rho<1$ and let's use the change of variables $\theta=(1-\rho)u$. Then we have \vspace{-2mm}
\begin{align*}
B(r,\rho)& \leqslant\sigma_*(3)\int_{0}^{\frac{\pi}{1-\rho}}\frac{u\,\mathrm{d}u}{1+\frac{u^2}{\pi^2}} =\sigma_*(3)\left(\int_{0}^{1}\frac{u\,\mathrm{d}u}{1+\frac{u^2}{\pi^2}}+\int_{1}^{\frac{\pi}{1-\rho}}\frac{u\,\mathrm{d}u}{1+\frac{u^2}{\pi^2}}\right)\\
& \leqslant C_3+ \widetilde{C}_4\int_{1}^{\frac{\pi}{1-\rho}}\frac{u\,\mathrm{d}u}{u^2}\\
& \leqslant C_3-C_4\log(1-\rho).
\end{align*}
\epf\vspace{-5mm}

\begin{prop}\label{mainProp1}
Suppose $n\geqslant 3,\psi\in C(\mathbb{B}^n,\mathbb{R}^n)$ and $|\psi(x)\leqslant M(1-|x|^2)$ in $\mathbb{B}^n$, where $M$ is a constant. Let\vspace{-1mm}
$$I_{2,k}(x)=\int_{\B^n}\left| \frac{\partial}{\partial x_k}G_h(x,y)\psi(y)\right|\,\mathrm{d}\tau(y).$$
Then there is a constant $\beta=\beta(n,M)$ such that
$$I_{2,k}(x)\leqslant\beta_1,\mbox{ for all } x\in\mathbb{B}^n.$$
\end{prop}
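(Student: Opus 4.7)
The plan is to establish a sharp pointwise bound on $|\nabla_x G_h(x,y)|$, simplify $I_{2,k}$ by the M\"{o}bius change of variables $y=\varphi_x(z)$, and then estimate the resulting radial integral via Lemma~\ref{IntLem1}.

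For the pointwise bound, start from $G_h(x,y)=g(|T_y x|)$ and apply the chain rule. The identity $|T'_y x|=(1-|y|^2)/[x,y]^2$ from (\ref{IdentMob1}) together with $|g'(r)|=(1-r^2)^{n-2}/r^{n-1}$, $r=|T_yx|=|x-y|/[x,y]$, and $1-r^2=(1-|x|^2)(1-|y|^2)/[x,y]^2$ from (\ref{IdentMob2}) gives
\[
|\nabla_x G_h(x,y)|\leqslant\frac{(1-|x|^2)^{n-2}(1-|y|^2)^{n-1}}{[x,y]^{n-1}|x-y|^{n-1}}.
\]
Combining this with the hypothesis $|\psi(y)|\leqslant M(1-|y|^2)$ and $\dd\tau(y)=\dd\nu(y)/(1-|y|^2)^n$, all powers of $1-|y|^2$ cancel and
\[
I_{2,k}(x)\leqslant M(1-|x|^2)^{n-2}\int_{\mathbb{B}^n}\frac{\dd\nu(y)}{[x,y]^{n-1}|x-y|^{n-1}}.
\]

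I then change variables $y=\varphi_x(z)$. Starting from (\ref{IdentMob2}) and $|T_y x|=|\varphi_x(y)|=|z|$, one derives the identities $[x,y]=(1-|x|^2)/[x,z]$, $|x-y|=|z|(1-|x|^2)/[x,z]$, and the conformal Jacobian $\dd\nu(y)=(1-|x|^2)^n[x,z]^{-2n}\dd\nu(z)$. A short algebraic cancellation absorbs the prefactor $(1-|x|^2)^{n-2}$ and leaves the single-scale integral
\[
I_{2,k}(x)\leqslant M\int_{\mathbb{B}^n}\frac{\dd\nu(z)}{|z|^{n-1}[x,z]^2}.
\]

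In polar coordinates $z=\rho\xi$, the Jacobian $n\rho^{n-1}\dd\rho\,\dd\sigma(\xi)$ cancels the factor $|z|^{n-1}$ and
\[
I_{2,k}(x)\leqslant Mn\int_0^1 B(|x|,\rho)\dd\rho
\]
with $B$ as in Lemma~\ref{IntLem1}. When $|x|\rho\leqslant 1/2$ the elementary inequality $[x,\rho\xi]^2\geqslant(1-|x|\rho)^2\geqslant 1/4$ gives $B(|x|,\rho)\leqslant 4$. When $|x|,\rho>1/2$, Lemma~\ref{IntLem1}(i) yields $B(|x|,\rho)\leqslant C_2/\rho\leqslant 2C_2$ for $n>3$, while Lemma~\ref{IntLem1}(ii) yields the integrable bound $B(|x|,\rho)\leqslant C_3-C_4\log(1-\rho)$ for $n=3$. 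Splitting the $\rho$-integration accordingly produces a bound $\int_0^1 B(|x|,\rho)\dd\rho\leqslant C(n)$ uniform in $x$, and the claim follows with $\beta_1=\beta_1(n,M)$.

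The main technical nuisance I anticipate is that Lemma~\ref{IntLem1}(i) bounds $B$ only by $C_2/\rho$, which fails to be integrable at $\rho=0$; one therefore cannot apply the lemma on the whole interval $(0,1)$. The remedy is to dovetail it with the trivial bound $B\leqslant 4$ on the region where $|x|\rho$ is bounded away from $1$. The decisive simplification is the M\"{o}bius substitution, which converts a two-scale integrand involving both $[x,y]$ and $|x-y|$ into the single-scale integrand $1/(|z|^{n-1}[x,z]^2)$, on which Lemma~\ref{IntLem1} acts directly.
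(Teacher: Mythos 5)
Your proof is correct, and it shares the paper's overall strategy (M\"obius substitution $y=\varphi_x(z)$ followed by Lemma \ref{IntLem1}), but the opening step is genuinely different. The paper starts from the explicit two-term formula (5.3) of \cite{ChenRas}, splits $I_{2,k}\leqslant\frac1n(I_{3,k}+I_{4,k})$, and after the substitution arrives at two integrals, $J_{3,k}=M\int_{\mathbb{B}^n}|w|^{1-n}[x,w]^{-1}\dd\nu(w)$ and $J_{4,k}=M\int_{\mathbb{B}^n}|w|^{2-n}[x,w]^{-2}\dd\nu(w)$, i.e.\ at $\int_0^1A(r,\rho)\dd\rho$ and $\int_0^1\rho B(r,\rho)\dd\rho$. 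You instead derive the single pointwise bound $|\nabla_xG_h(x,y)|\leqslant(1-|x|^2)^{n-2}(1-|y|^2)^{n-1}[x,y]^{1-n}|x-y|^{1-n}$ directly from $G_h(x,y)=g(|T_yx|)$ by the chain rule together with (\ref{IdentMob1}) and (\ref{IdentMob2}); this dominates both of the paper's terms (use $1-|x|^2\leqslant 2[x,y]$ for the first and $|x-y|\leqslant[x,y]$ for the second), so your computation is consistent with theirs, and it collapses everything into the single integral $M\int_{\mathbb{B}^n}|z|^{1-n}[x,z]^{-2}\dd\nu(z)=Mn\int_0^1B(|x|,\rho)\dd\rho$ --- precisely the quantity controlled by Lemma \ref{IntLem2}. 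Your route buys economy (one kernel estimate instead of two, no appeal to Claims 5.3--5.4 of \cite{ChenRas}) and a more explicit treatment of uniformity in $x$: you cover $|x|\leqslant1/2$ and small $\rho$ by the elementary bound $[x,\rho\xi]^2\geqslant(1-|x|\rho)^2\geqslant1/4$, where the paper is silent. The price is that your integral lacks the extra factor $\rho$ appearing in $\int_0^1\rho B\dd\rho$, so the non-integrability of the bound $C_2/\rho$ at $\rho=0$ must be patched by exactly the dovetailing you describe; you identified and resolved this correctly.
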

\bpf
Formula (5.3) from \cite{ChenRas} gives $\frac{\partial}{\partial x_k}G_h(x,y)= (D_k G_{h})_1(x,y) + (D_k G_{h})_2(x,y)$, where
\vspace{-2mm} $$(D_k G_{h})_1(x,y):= -\frac{(x_k-y_k) (1-|x|^2)^{n-1} (1-|y|^2)^{n-1}}{n|x-y|^n[x,y]^n}$$
\vspace{-2mm}
and  $$(D_k G_h)_2(x,y):=- \frac{x_k(1-|x|^2)^{n-2}(1-|y|^2)^{n-1}}{n|x-y|^{n-2}[x,y]^n}.\vspace{-1mm}$$
\vspace{-2mm}
Also, from \cite{ChenRas} we have\vspace{-2mm}
$$I_{2,k}(x)\leqslant \frac{1}{n}\left(I_{3,k}+I_{4,k}\right),\vspace{-1mm}$$
where\vspace{-2mm}
$$I_{3,k}(x)=\int_{\B^n}\frac{|x_k-y_k|(1-|x|^2)^{n-1}(1-|y|^2)^{n-1}}{|x-y|^n[x,y]^n}|\psi(y)|\,\mathrm{d}\tau(y)\vspace{-1mm}$$
and\vspace{-1mm}
$$I_{4,k}(x)=\int_{\B^n}\frac{|x_k|(1-|x|^2)^{n-2}(1-|y|^2)^{n-1}}{|x-y|^{n-2}[x,y]^n}|\psi(y)|\,\mathrm{d}\tau(y).\vspace{-1mm}$$

After applying condition (h3), and introducing change of variable $y=\varphi_x(w)$, having in mind (\ref{IdentMob1}) and (\ref{IdentMob2}),
we get
\begin{equation*}
    |I_{3,k}(x)|\leqslant J_{3,k}(x):=M\int_{\mathbb{B}^n}\frac{\dd\nu(w)}{[x,w]|w|^{n-1}} \mbox{ and } |I_{4,k}(x)|\leqslant J_{4,k}(x):=M\int_{\mathbb{B}^n}\frac{\dd\nu(w)}{[x,w]^2|w|^{n-2}}.
\end{equation*}
(For more details, see \cite[$Claim\ 5.3,5.4$]{ChenRas}). Finally, using Lemma \ref{IntLem1}, we get
\begin{equation}\label{cl4}
J_{3,k}(x)= n M\int_{0}^{1}\int_{\Sp^{n-1}}\frac{\mathrm{d}\sigma(\xi)}{[x,\rho\xi]}\,\mathrm{d}\rho=\int_{0}^{1}A(r,\rho) \,\mathrm{d}\rho<+\infty\vspace{-2mm}
\end{equation}
and\vspace{-2mm}
\begin{equation}\label{cl5}
J_{4,k}(x)=n M\int_{0}^{1}\rho\,\mathrm{d}\rho\int_{\Sp^{n-1}}\frac{\mathrm{d}\sigma(\xi)}{[x,\rho\xi]^2}=\int_{0}^{1}\rho B(r,\rho)\,\mathrm{d}\rho<+\infty.\vspace{-1mm}
\end{equation}
Here, we used that $[x,\rho\xi]=\sqrt{1-2\rho\langle x,\xi\rangle+\rho^2|x|^2}$.
\epf
Let $\Omega\subset\mathbb{R}^n$ and $F,G:\Omega\rightarrow\mathbb{R}_+:=[0,+\infty)$. We use notations $F(x)\preceq G(x), x\in\Omega$ which means that there is $C>0$ such that $F(x)\leqslant CG(x), x\in\Omega$.
\begin{lem}
  Suppose that $n\geqslant 3$. Let $I_m=\int_{\Sp^{n-1}}\frac{\mathrm{d}\sigma(\xi)}{|x-\xi|^m}$, where $x\in\B^n$ and $r=|x|$. When $r\rightarrow 1^-$, we have\vspace{-2mm}
  \begin{itemize}
    \item[(i)] $I_m$ is bounded for $0<m<n-1$.\vspace{-2mm}
    \item[(ii)] $I_{n-1}\preceq\log\frac{1}{1-r}$.\vspace{-2mm}
    \item[(iii)] $I_m\preceq\frac{1}{(1-r)^{m-n+1}}$ for $m>n-1$.\vspace{-2mm}
  \end{itemize}
\end{lem}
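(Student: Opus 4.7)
The plan is to convert the spherical integral into a one-dimensional integral using Proposition \ref{MMThmSphere} and then reduce all three cases to the same family of one-dimensional integrals, which split naturally according to how the integrand $u^{n-2}/(1+u^{2})^{m/2}$ behaves at infinity.

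First I would fix $x=re_n$ (the integral depends only on $r=|x|$ by rotational symmetry) and parametrize $\xi\in\mathbb{S}^{n-1}$ by the angle $\theta$ between $x$ and $\xi$. Then
$$|x-\xi|^{2}=1-2r\cos\theta+r^{2}=(1-r)^{2}+4r\sin^{2}(\theta/2),$$
and Proposition \ref{MMThmSphere} gives
$$I_{m}=\sigma_{*}(n)\int_{0}^{\pi}\frac{\sin^{n-2}\theta}{\bigl((1-r)^{2}+4r\sin^{2}(\theta/2)\bigr)^{m/2}}\,\mathrm{d}\theta.$$
Using the elementary estimates $\sin\theta\leqslant\theta$ and $\sin(\theta/2)\geqslant\theta/\pi$ for $\theta\in[0,\pi]$, this reduces to bounding
$$I_{m}\preceq \int_{0}^{\pi}\frac{\theta^{n-2}}{\bigl((1-r)^{2}+(4r/\pi^{2})\,\theta^{2}\bigr)^{m/2}}\,\mathrm{d}\theta.$$

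Next I would perform the substitution $\theta=(1-r)u$, following the same pedagogical change of variables (\ref{priv}) used in Proposition~\ref{prop2a}. This yields, for $\tfrac12\leqslant r<1$,
$$I_{m}\preceq (1-r)^{n-1-m}\int_{0}^{\pi/(1-r)}\frac{u^{n-2}}{(1+u^{2})^{m/2}}\,\mathrm{d}u,$$
where the factor $(1+(4r/\pi^{2})u^{2})^{-m/2}\leqslant C(1+u^{2})^{-m/2}$ has been absorbed since $r\geqslant 1/2$.

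Now the three cases follow by examining the tail behaviour of the integrand $\sim u^{n-2-m}$ as $u\to\infty$. For $0<m<n-1$ the tail exponent satisfies $n-2-m>-1$, so $\int_{0}^{L}u^{n-2}(1+u^{2})^{-m/2}\mathrm{d}u\preceq L^{n-1-m}$; plugging in $L=\pi/(1-r)$ cancels the prefactor and yields $I_{m}\preceq 1$, giving (i). For $m=n-1$ the integrand is $\sim 1/u$, so the truncated integral is $\preceq \log(1/(1-r))$ while the prefactor is $(1-r)^{0}=1$, giving (ii). For $m>n-1$ the improper integral over $[0,\infty)$ converges, so the truncated integral is bounded and only the prefactor $(1-r)^{n-1-m}=(1-r)^{-(m-n+1)}$ survives, giving (iii). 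The range $0\leqslant r\leqslant 1/2$ is handled by continuity, since $I_{m}$ is a continuous function of $r$ on $[0,1/2]$ and hence attains a finite maximum there.

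The main technical step is the substitution $\theta=(1-r)u$, which simultaneously handles all three regimes: the prefactor $(1-r)^{n-1-m}$ and the tail of the $u$-integral conspire to produce the three different growth rates. No single estimate is subtle in isolation, but one must be careful to split off the region $r\in[0,1/2]$ at the end and to justify the use of $\sin(\theta/2)\geqslant\theta/\pi$ on the full interval $[0,\pi]$.
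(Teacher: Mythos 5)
Your proposal is correct and follows essentially the same route as the paper: Proposition \ref{MMThmSphere} to reduce to a one-dimensional integral, the elementary sine bounds, and the substitution $\theta=(1-r)u$ to extract the growth rate in each regime. The only cosmetic difference is that you run all three cases through the substitution uniformly, whereas the paper disposes of case (i) by a direct bound $I_m\preceq\int_0^{\pi/2}\theta^{n-2-m}\,\mathrm{d}\theta$ before substituting; both are fine.
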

\bpf
Using Proposition \ref{MMThmSphere} we can conclude that,\vspace{-2mm}
$$I_m=\sigma_*(n)\int_{0}^{\pi}\frac{\sin^{n-2}\theta}{((1-r)^2+4r\sin^2\frac{\theta}{2})^{\frac{m}{2}}}\,\mathrm{d}\theta.\vspace{-2mm}$$
Now we can rewrite integral $I_m$ in the form\vspace{-2mm}
\begin{align*}
  I_m & = \sigma_*(n)\left(\int_{0}^{\frac{\pi}{2}}\frac{\sin^{n-2}\theta}{((1-r)^2+4r\sin^2\frac{\theta}{2})^{\frac{m}{2}}}\,\mathrm{d}\theta+\int_{\frac{\pi}{2}}^{\pi}\frac{\sin^{n-2}\theta}{((1-r)^2+4r\sin^2\frac{\theta}{2})^{\frac{m}{2}}}\,\mathrm{d}\theta\right).\vspace{-3mm}
\end{align*}
Since $\sin x\sim x$ for $x\in (0,\pi/2)$ we have that, when $r\ra 1^-$, there is $c_3>0$ such that\vspace{-2mm}
$$I_m\preceq c_3\int_{0}^{\frac{\pi}{2}}\frac{\theta^{n-2}}{\theta^m}\dif\theta,\vspace{-2mm}$$
which gives us conclusion $(i)$. Also, there exists $c_4,c_5>0$ such that
$$I_m=c_4\int_{0}^{\frac{\pi}{2}}\frac{\sin^{n-2}\theta}{((1-r)^2+4r\sin^2\frac{\theta}{2})^{\frac{m}{2}}}\,\mathrm{d}\theta+c_5.$$
In order to prove $(ii)$, we use change of variables $\theta=(1-r)u$. Then we have
\begin{align*}
I_m \preceq\frac{1}{(1-r)^{m-n+1}}\int_{0}^{\frac{\pi}{2(1-r)}}\frac{u^{n-2}\,\mathrm{d}u}{(1+u^2)^{\frac{m}{2}}},
\end{align*}
which gives us our conclusion Now, let us assume that $m=n-1$. We have
\begin{align*}
  I_{n-1} & \preceq \int_{0}^{\frac{\pi}{2(1-r)}}\frac{u^{n-2}\,\mathrm{d}u}{(1+u^2)^{\frac{n-1}{2}}}\\
   & = \left(\int_{0}^{1}\frac{u^{n-2}\,\mathrm{d}u}{(1+u^2)^{\frac{n-1}{2}}}+\int_{1}^{\frac{\pi}{2(1-r)}}\frac{u^{n-2}\,\mathrm{d}u}{(1+u^2)^{\frac{n-1}{2}}}\right)\\
   & \preceq \log\frac{1}{1-r}.
\end{align*}
\epf
Before the next result, let us prove the following
\begin{lem}\label{IntLem2}
If $r=|x|$, $0\leqslant\rho< 1,1/2\leqslant r<1$ and $B(r,\rho)$ is defined as in Lemma \ref{IntLem1} we have that
\vspace{-2mm}
\begin{equation}\label{BInt}
    \int_0^1 B(r,\rho)\dd\rho< +\infty
\end{equation}
\end{lem}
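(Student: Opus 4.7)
My plan is to split the integral $\int_0^1 B(r,\rho)\,\dd\rho$ at $\rho=1/2$ and estimate each piece using what Lemma \ref{IntLem1} already gives us, complemented by an elementary bound near $\rho=0$. The content of the lemma is purely a convergence claim, so the calculations can be kept very short.

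For the piece $\rho\in[1/2,1)$ I would simply invoke Lemma \ref{IntLem1}. In the case $n>3$, part (i) provides $B(r,\rho)\leqslant C_2/\rho$, which is bounded by $2C_2$ on $[1/2,1)$. In the case $n=3$, part (ii) provides $B(r,\rho)\leqslant C_3-C_4\log(1-\rho)$; the logarithm is integrable on $[1/2,1)$ since $\int_{1/2}^1 -\log(1-\rho)\,\dd\rho=\int_0^{1/2}(-\log u)\,\dd u<\infty$ after $u=1-\rho$. So the right-half of the integral is finite in both cases.

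For the piece $\rho\in[0,1/2]$, the bound $C_2/\rho$ from Lemma \ref{IntLem1} is too crude near the origin, so I would bypass it with a direct triangle-inequality estimate. Writing
\[
1-2\rho\langle x,\xi\rangle+\rho^2|x|^2=|\xi-\rho x|^2\geqslant (1-\rho r)^2,
\]
one gets $B(r,\rho)\leqslant (1-\rho r)^{-2}$. Since $r<1$ and $\rho\leqslant 1/2$, we have $\rho r\leqslant 1/2$, hence $B(r,\rho)\leqslant 4$ uniformly in this range. Integrating over $[0,1/2]$ yields a contribution bounded by $2$. Adding the two pieces gives $\int_0^1 B(r,\rho)\,\dd\rho<\infty$, as required.

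There is no real obstacle here: the only subtle point is noticing that the Lemma \ref{IntLem1} estimate $B\leqslant C_2/\rho$ degenerates at $\rho=0$, which is why the separate low-$\rho$ argument is necessary. Everything else is a direct application of the already-established bounds.
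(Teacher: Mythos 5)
Your proof is correct and takes essentially the same approach as the paper: both split the integral at $\rho=1/2$, bound $B(r,\rho)$ by an absolute constant for $\rho\leqslant 1/2$ (the paper via $(1-\rho r)^2\geqslant 1/4$ in the spherical-coordinate formula, you via the equivalent triangle-inequality estimate $|\xi-\rho x|\geqslant 1-\rho r$), and on $[1/2,1)$ use the bound $C_2/\rho$ for $n>3$ and the integrable logarithmic bound from Lemma~\ref{IntLem1}(ii) for $n=3$.
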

\begin{proof}
By using Proposition \ref{MMThmSphere}, we can check that\vspace{-1mm}
$$B(r,\rho){=}\sigma_*(n)\int_{0}^{
\pi}\frac{\sin^{n{-}2}\theta}{(1{-}\rho r)^2{+}4\rho r\sin^2\frac{\theta}{2}}\,\mathrm{d}\theta.\vspace{-2mm}$$
If $0<\rho\leqslant 1/2$, we have\vspace{-2mm}
$$B(r,\rho)\preceq \int_0^{\pi}\frac{\theta^{n-2}}{1/4}\dd\theta=const.\vspace{-2mm}$$
For $1/2<\rho<1$ we can use\vspace{-2mm}
$$B(r,\rho)\preceq\frac{1}{\rho}\int_{0}^{\pi}\frac{\theta^{n-2}}{\theta^2}\,\mathrm{d}\theta\preceq \frac{1}{\rho}, \mbox{ for } n>3.\vspace{-2mm}$$

This means that, for $n>3$ we have our result. In the case $n=3$ we have that condition $(ii)$ from Lemma \ref{IntLem1} the conclusion.

\end{proof}
\begin{prop}\label{MainProp}
Suppose $n\geq 3$, $\psi\in C(\mathbb{B}^n,\mathbb{R}^n)$ and $|\psi(x)|\leqslant M(1-|x|^2)$ in $\mathbb{B}^n$, where $M$ is a constant. Let $1\leqslant k\leqslant n$ and $u(x)=\int_{\B^n} G_h(x,y)\psi(y)\,\mathrm{d}\tau(y).$ Then\vspace{-2mm}
$$\frac{\partial}{\partial x_k}u(x)=\int_{\B^n} \frac{\partial}{\partial x_k}G_h(x,y)\psi(y)\,\mathrm{d}\tau(y),\vspace{-2mm}$$
and there exists a constant $\beta_1=\beta_1(n,M)$ such that\vspace{-2mm}
$$\left|\frac{\partial}{\partial x_k}u(x)\right|\leqslant\beta_1, \mbox{ for all $x\in\B^n$}.$$
\end{prop}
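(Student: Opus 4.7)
The uniform pointwise bound $|\partial u/\partial x_k(x)|\le \beta_1$ is an immediate consequence of Proposition~\ref{mainProp1}, provided we first establish the validity of differentiation under the integral sign; once the Leibniz formula in the statement is justified, taking absolute values inside the integral gives $|\partial u/\partial x_k(x)|\le I_{2,k}(x)\le \beta_1$. Thus the real content of the proposition is the interchange of derivative and integral.

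The plan is to combine the uniform $L^1$-estimate of Proposition~\ref{mainProp1} with a Fubini--fundamental-theorem-of-calculus argument, thereby avoiding the need for a pointwise majorant of the singular kernel. Fix $x\in\mathbb{B}^n$ and $h\in\mathbb{R}$ small enough that the segment $[x,x+he_k]$ lies in $\mathbb{B}^n$. Since $y\mapsto G_h(\cdot,y)$ is smooth away from $\{y\}$, one has
$$u(x+he_k)-u(x)=\int_{\mathbb{B}^n}\int_0^h\frac{\partial G_h}{\partial x_k}(x+se_k,y)\,\mathrm{d}s\,\psi(y)\,\mathrm{d}\tau(y).$$
By Proposition~\ref{mainProp1}, $\int_{\mathbb{B}^n}|\partial_{x_k}G_h(x+se_k,y)\psi(y)|\,\mathrm{d}\tau(y)\le \beta_1$ uniformly in $s$, so the double integral is absolutely convergent and Fubini's theorem converts the difference quotient into
$$\frac{u(x+he_k)-u(x)}{h}=\frac{1}{h}\int_0^h F(x+se_k)\,\mathrm{d}s,\qquad F(\xi):=\int_{\mathbb{B}^n}\frac{\partial G_h}{\partial x_k}(\xi,y)\,\psi(y)\,\mathrm{d}\tau(y).$$

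It therefore suffices to prove that $F$ is continuous on $\mathbb{B}^n$, since letting $h\to 0$ then gives $\partial u/\partial x_k(x)=F(x)$ for every $x$. Continuity of $F$ at an arbitrary $x_0\in\mathbb{B}^n$ is obtained by splitting the $y$-integral into the near part $B(x_0,\eta)$ and its complement. On the complement the integrand is jointly continuous for $x$ close to $x_0$ and enjoys the boundary decay estimate $|\partial_{x_k}G_h(x,y)|\preceq (1-|y|^2)^{n-1}$ built into the formulas for $(D_kG_h)_1$ and $(D_kG_h)_2$ recalled in Proposition~\ref{mainProp1}, so a standard dominated convergence argument applies. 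The near part contributes at most $O(\eta)$ uniformly in $x$ close to $x_0$, via the locally integrable Euclidean singularity $|x-y|^{1-n}$ (here $n\ge 3$ is essential) combined with $|\psi(y)|\le M$ on compacta. Sending $\eta\to 0$ yields continuity of $F$.

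The principal difficulty is the diagonal singularity of $\partial_{x_k}G_h$, which prevents a direct dominated convergence argument for the original difference quotient, since $\sup_{x'\in B(x_0,\delta)}|\partial_{x_k}G_h(x',y)|$ is not integrable near $y=x_0$. The Fubini-plus-FTC reduction above circumvents this by replacing that pointwise requirement with mere continuity of the single integral $F$; the splitting trick then succeeds because the uniform $L^1$-bound from Proposition~\ref{mainProp1} controls the behaviour near the boundary sphere, while the mild local integrability of $|x-y|^{1-n}$ handles the diagonal.
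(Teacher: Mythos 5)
Your proof is correct, but it takes a genuinely different route from the paper's. The paper does not argue at the point $x$ directly: it sets $V^x=u\circ\varphi_x$, uses the M\"obius invariance $\Delta_h V^x=\psi\circ\varphi_x$ to represent $V^x$ as a Green potential evaluated at the origin, differentiates there --- where $\partial_{x_k}G_h(0,y)=y_k(1-|y|^2)^{n-1}/(n|y|^n)$ is explicit and has a single integrable singularity --- obtains $|\nabla V^x(0)|\preceq 1-|x|^2$ via Lemma~\ref{IntLem2} and the identity $1-|\varphi_x(y)|^2=(1-|x|^2)(1-|y|^2)/[x,y]^2$, and then converts back using $|\nabla V^x(0)|\approx(1-|x|^2)|\nabla u(x)|$, i.e.\ (\ref{aut-est}). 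Your argument instead stays at $x$ and supplies exactly the step the paper leaves implicit, namely the justification of differentiation under the integral sign. The Fubini--FTC reduction to continuity of $F$ is sound: Tonelli applies because $\int_0^h I_{2,k}(x+se_k)\,\mathrm{d}s\le h\beta_1$ by Proposition~\ref{mainProp1}, the exceptional $y$ lying on the segment form a null set, and your splitting argument for continuity of $F$ works because for $y$ outside $B(x_0,\eta)$ and $x$ near $x_0$ both $[x,y]$ and $|x-y|$ are bounded below, so the kernel is $\preceq(1-|y|^2)^{n-1}$ and is dominated after multiplying by $|\psi|\,\mathrm{d}\tau\preceq(1-|y|^2)^{1-n}\,\mathrm{d}\nu$. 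What each approach buys: yours is self-contained real analysis, makes the Leibniz formula (the first assertion of the proposition) rigorous, and reuses Proposition~\ref{mainProp1} for the bound; the paper's conformal change of variable sidesteps the diagonal singularity by moving it to the origin, at the cost of invoking the invariance machinery and the gradient identity. One cosmetic slip: local integrability of $|x-y|^{1-n}$ in $\mathbb{R}^n$ holds for every $n$, not only $n\ge 3$; the restriction $n\ge3$ enters only through the formulas for $(D_kG_h)_1$ and $(D_kG_h)_2$.
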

\bpf
Let us fix $x\in\mathbb{B}_n$ and denote $V^x=u\circ \varphi_x$. Then, by using invariance of $\Delta_h$ we have $\Delta_h V^x (y)=\Delta_h(u\circ \varphi_x)(y)= \Delta_h u(\varphi_x((y)) = \psi(\varphi_x((y)) $ and\vspace{-2mm}
$$V^x(0)=  \int_{\mathbb{B}_n} G_{h}(0,y)  \Delta_h V^x (y)  \dd\tau(y)= \int_{\mathbb{B}_n} G_{h}(0,y) \psi(\varphi_x(y)) \dd\tau(y).\vspace{-2mm}$$
This means that\vspace{-1mm}
$$\frac{\partial}{\partial x_k} V^x(0)= \int_{\mathbb{B}_n}\frac{\partial}{\partial x_k} G_{h}(0,y) \psi(\varphi_x(y)) \dd\tau(y).\vspace{-1mm}$$
Formula (5.3) from \cite{ChenRas} gives us\vspace{-2mm}
$$\frac{\partial}{\partial x_k}G_h(0,y)=\frac{y_k(1-|y|^2)^{n-1}}{n|y|^n}.\vspace{-3mm}$$
Now, we have\vspace{-2mm}
$$\left|\frac{\partial}{\partial x_k} V^x(0)\right|\leqslant\frac 1n\int_{\mathbb{B}_n}\frac{(1-|y|^2)^{n-1}}{|y|^{n-1}}|\psi(\varphi_x(y))| \dd\tau(y)\preceq\int_{\mathbb{B}_n}\frac{(1-|y|^2)^{n-1}}{|y|^{n-1}}(1-  |\varphi_x(y)|^2) \dd\tau(y).\vspace{-2mm}$$
Since  $1- |\varphi_x(y)|^2= \dfrac{(1-|y|^2) (1-|x|^2)}{[x,y]^2}$, we get\vspace{-3mm}
$$\left|\frac{\partial}{\partial x_k} V^x(0)\right|\preceq \int_{\mathbb{B}_n}\frac{\dd \nu(y)}{|y|^{n-1}[x,y]^2}(1-|x|^2)=\int_0^1\dif\rho\int_{\mathbb{S}^{n-1}}\frac{\dd\sigma(\xi)}{1-2\rho\langle x,\xi\rangle+\rho^2|x|^2}(1-|x|^2)\preceq 1-|x|^2,\vspace{-1mm} $$
for $1/2\leqslant |x|< 1$, by Lemma \ref{IntLem2}. Using (cf. \cite{stoll})
\begin{equation}\label{aut-est}
 |\nabla V^x(0)|  \approx |\nabla u(x)| (1-|x|^2),\vspace{-1mm}
\end{equation}
we find that $\nabla u$ is bounded, q.e.d.
\epf\vspace{-4mm}
\subsection{Green function and Riesz potential}

At the end of this section, let us investigate some alternative assumptions, which can be stated, instead of condition (h3).\\
Let us introduce the following condition.
\begin{itemize}
    \item[(h3-1)] Function $\psi$ belongs to $C(\mathbb{B}^n)$ and $(1-|y|^2)^{-2} \psi $ belongs  $L^p(\mathbb{B}^n,\nu(y))$ for some $p>n$.
\end{itemize}
Let $\mu\in (0,1)$ and $\Omega$ be a bounded domain in $\mathbb{R}^n$. We define Riesz potential, as the following operator
\vspace{-3mm}
$V_\mu$ on $L_1(\Omega,\dd\nu)$ as \vspace{-2mm}
$$V_\mu(x)=\int_{\Omega}|x-y|^{n(\mu-1)}f(y)\dd \nu(y).\vspace{-2mm}$$
Now, let us state \cite[Lemmma 7.12]{gil.trud}.

\begin{lema}\label{RisLema}
The operator $V_\mu$ maps $L^p(\Omega{,}\dd\nu)$ continuously into $L^q(\Omega{,}\dd\nu)$ for any q, $1{\leqslant} q{\leqslant} +\infty$, satisfying\vspace{-2mm}
\begin{equation}\label{RisCond}
    0\leqslant\delta=\delta(p,q)=\frac 1p-\frac 1q<\mu.\vspace{-2mm}
\end{equation}
\end{lema}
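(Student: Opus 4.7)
The plan is to treat $V_\mu$ as an integral operator with kernel $k(z) = |z|^{n(\mu-1)}$. Since $\mu \in (0,1)$ and $\Omega$ is bounded, the exponent $n(\mu-1)$ lies in $(-n, 0)$, so $k$ is locally integrable. The recurring ingredient will be the elementary estimate $\int_\Omega |x-y|^{-s}\,\mathrm{d}\nu(y) \leqslant C(\Omega,s)$, valid uniformly in $x \in \Omega$ whenever $s < n$.

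First I would handle the boundary case $q = +\infty$, where $\delta = 1/p$. A direct application of H\"{o}lder's inequality yields
$$|V_\mu f(x)| \leqslant \|f\|_{L^p(\Omega)} \left(\int_\Omega |x-y|^{n(\mu-1)p'}\,\mathrm{d}\nu(y)\right)^{1/p'},$$
and the last integral is uniformly finite in $x$ precisely when $(1-\mu)p' < 1$, i.e.\ when $\delta < \mu$.

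For $1 \leqslant q < +\infty$ and $p < q$, the main step is a three-factor H\"{o}lder decomposition. I split
$$k(x-y)|f(y)| = k(x-y)^a \cdot \bigl[k(x-y)^b |f(y)|^{p/q}\bigr] \cdot |f(y)|^{1-p/q}, \quad a + b = 1,$$
and apply H\"{o}lder's inequality with the exponent triple $(p', q, \gamma)$ chosen so that $(1-p/q)\gamma = p$; this forces $1/\gamma = \delta$ and the conjugacy relation $1/p' + 1/q + 1/\gamma = 1$. Raising to the $q$-th power, integrating in $x$, and exchanging the order of integration in the middle factor via Fubini yields
$$\|V_\mu f\|_{L^q(\Omega)}^q \leqslant \left(\sup_{x\in\Omega}\int_\Omega k(x-y)^{ap'}\,\mathrm{d}\nu(y)\right)^{q/p'}\left(\sup_{y\in\Omega}\int_\Omega k(x-y)^{bq}\,\mathrm{d}\nu(x)\right)\|f\|_{L^p}^q,$$
where the total power $q$ of $\|f\|_{L^p}$ arises by combining the contributions $p$ (from the middle factor) and $q-p$ (from the third).

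The main obstacle is choosing $a, b > 0$ with $a + b = 1$ so that both supremum-integrals above are finite. By the elementary bound noted at the start, this requires $ap'(1-\mu) < 1$ and $bq(1-\mu) < 1$, and such a pair exists iff
$$\frac{1}{p'(1-\mu)} + \frac{1}{q(1-\mu)} > 1,$$
which simplifies exactly to $\delta = 1/p - 1/q < \mu$, matching the hypothesis. The degenerate case $p = q$ (so $\delta = 0$) falls out directly from Young's convolution inequality, since $k \in L^1(\Omega)$ whenever $n(1-\mu) < n$.
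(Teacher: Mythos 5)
Your proposal is correct, but note that the paper itself offers no proof of this statement: it is quoted verbatim as Lemma 7.12 of Gilbarg--Trudinger \cite{gil.trud}, so the comparison is really with that source. Your argument is essentially the textbook one. Gilbarg--Trudinger introduce $r$ by $1/r=1-\delta$ and split $h|f|=(h^r|f|^p)^{1/q}\,h^{r(1-1/q)-r\delta}\,|f|^{p\delta}$ before applying the generalized H\"older inequality with exponents $(q,\,r/(1-r/q)^{-1}$-type, $1/\delta)$; your splitting $k^a\cdot(k^b|f|^{p/q})\cdot|f|^{1-p/q}$ with exponents $(p',q,1/\delta)$ is an equivalent rearrangement, and your bookkeeping (the exponent count $p+pq\delta=q$ on $\|f\|_p$, the Fubini step, and the reduction of the solvability of $ap'(1-\mu)<1$, $bq(1-\mu)<1$, $a+b=1$ to $\delta<\mu$) is accurate. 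Two small points to tighten: when $p=1$ (so $p'=\infty$) you must take $a=0$, so the requirement should read $a,b\geqslant 0$ rather than $a,b>0$, the first factor then being trivial; and strictly speaking the case $p=q$ need not be separated out, since $\gamma=\infty$ simply collapses the third factor to $1$. Neither affects the validity of the argument.
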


Since
\vspace{-2mm}
$$ | (D_k G_h)_1(x,y)|\preceq  K_0(x,y):=\frac{(1-|x|^2)^{n-1} (1-|y|^2)^{n-1}}{|x-y|^{n-1}[x,y]^n} $$  and
$(1-|x|^2)^{n-1} (1-|y|^2)\leqslant [x,y]^n$, we find
\vspace{-2mm}
  $$ | (D_k G_{h})_1(x,y)|\preceq   K_1(x,y):=\frac{(1{-}|y|^2)^{n{-}2}}{|x{-}y|^{n{-}1}} \mbox{ and } | (D_k G_{h})_2(x,y)|\preceq   K_2(x,y):=\frac{(1{-}|y|^2)^{n{-}3}}{|x{-}y|^{n{-}2}}.\vspace{-2mm}$$
Set\vspace{-2mm}
$$M(x):=\int_{\mathbb{B}_n}  K_1(x,y) |\psi (y)| d\tau(y)=\int_{\mathbb{B}_n}\frac{(1-|y|^2)^{-2}\psi(y)}{|x-y|^{n-1}}\dd\nu(y).$$
Now, we can conclude that $|I_{3,k}(x)|\leqslant M(x)$.\smallskip

If $\psi $ satisfies (h3-1),  we have that the function $M$ is bounded on $\mathbb{B}^n$, by Lemma \ref{RisLema}. In order $|I_{4,k}|$ to be bounded, we need that $(1-|y|^2)^{-3}\psi$ belongs to $L^p(\mathbb{B}^n,\nu(y))$ for some $p>\frac{n}{2}$. It can easily be checked that condition (h3-1) implies boundedness of partial derivatives of the hyperbolic Green potential of the function $\psi$. Thus we have the following statement. Let\vspace{-2mm}
\begin{equation}\label{h4}
  (1-|y|^2)^{-1-\alpha} \psi \in L^p (\mathbb{B}^{n}, d \nu)\mbox{ for some } p>n\mbox{ and } \alpha\in (0,1].\vspace{-2mm}
\end{equation}
\begin{prop}
If   $(\ref{h4})$ holds, then
$G_{h}[\psi]$ is   $\alpha$-H\"{o}lder, $\alpha\in (0,1]$.
\end{prop}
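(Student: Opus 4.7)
My approach is to mirror Proposition~\ref{MainProp}: I shall show that under (\ref{h4}) the partial derivatives satisfy the growth estimate
$$|\nabla G_h[\psi](x)|\preceq (1-|x|)^{\alpha-1},\qquad x\in\mathbb{B}^n,$$
which is the classical Hardy--Littlewood characterization of $\alpha$-H\"older continuity on $\overline{\mathbb{B}^n}$. Integrating $|\nabla G_h[\psi]|$ along the segment joining two points then yields $|G_h[\psi](x)-G_h[\psi](y)|\preceq |x-y|^\alpha$. When $\alpha=1$, condition (\ref{h4}) reduces to (h3-1), the gradient is bounded, and the statement recovers Proposition~\ref{MainProp}.

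To obtain this gradient estimate, first differentiate under the integral sign as in Proposition~\ref{MainProp}, using the decomposition $\partial_{x_k}G_h=(D_kG_h)_1+(D_kG_h)_2$ and the pointwise bounds $|(D_kG_h)_i(x,y)|\preceq K_i(x,y)$ recalled in the text preceding this proposition. Converting $d\tau=(1-|y|^2)^{-n}d\nu$ yields
$$|\nabla G_h[\psi](x)|\preceq \int_{\mathbb{B}^n}\frac{|\psi(y)|}{(1-|y|^2)^2\,|x-y|^{n-1}}\,d\nu(y)+\int_{\mathbb{B}^n}\frac{|\psi(y)|}{(1-|y|^2)^3\,|x-y|^{n-2}}\,d\nu(y).$$
Writing $\psi(y)=(1-|y|^2)^{1+\alpha}h(y)$ with $h\in L^p(\mathbb{B}^n,d\nu)$, $p>n$, each integral becomes a weighted Riesz-type potential of $h$, with boundary weight $(1-|y|^2)^{\alpha-1}$ (resp.\ $(1-|y|^2)^{\alpha-2}$) and Euclidean singularity $|x-y|^{-(n-1)}$ (resp.\ $|x-y|^{-(n-2)}$).

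Next, apply H\"older's inequality with conjugate exponent $q=p/(p-1)\in(1,n/(n-1))$. This reduces the problem to bounding integrals of the form
$$\left(\int_{\mathbb{B}^n}\frac{(1-|y|^2)^{(\alpha-1)q}}{|x-y|^{(n-1)q}}\,d\nu(y)\right)^{1/q}$$
(and its analogue for the second term), which can be evaluated in spherical coordinates about $x$ together with the substitution $\theta=(1-|x|)u$ of the type used in (\ref{priv}) and in Lemmas~\ref{IntLem1},~\ref{IntLem2}. Since $p>n$ guarantees $(n-1)q<n$, the Euclidean singularity is integrable; the substitution then extracts the required boundary behaviour of order $(1-|x|)^{\alpha-1}$, analogous to the Forelli--Rudin-type computation underlying Proposition~\ref{prop2a}.

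The main obstacle is the delicate balancing of exponents. Integrability of the boundary weight $(1-|y|^2)^{(\alpha-1)q}$ requires $(\alpha-1)q>-1$, i.e.\ $\alpha>1-1/q$, so for small $\alpha$ one must choose $p$ correspondingly large (equivalently $q$ close to $1$), or split the integration into an inner region $|y|<1-(1-|x|)/2$ and a boundary collar $|y|\geq 1-(1-|x|)/2$ to obtain the sharp rate $(1-|x|)^{\alpha-1}$, in the spirit of the proof of Proposition~\ref{prop2a}. Once the gradient bound is in hand, the passage to $\alpha$-H\"older continuity of $G_h[\psi]$ on $\overline{\mathbb{B}^n}$ is routine.
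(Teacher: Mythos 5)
Your overall strategy (a Hardy--Littlewood gradient estimate $|\nabla G_h[\psi](x)|\preceq(1-|x|)^{\alpha-1}$ followed by integration along segments) agrees with what the paper intends, but the way you try to produce the factor $(1-|x|)^{\alpha-1}$ contains a genuine gap. By passing immediately to the crude majorants $K_1,K_2$ you discard the factor $[x,y]^{-n}$ and with it all dependence on $x$ other than through $|x-y|$; you are then forced to regenerate the boundary behaviour from the $y$-integral via H\"older duality, and the dual integrals you need are divergent. For the $K_2$-term the weight exponent is $(\alpha-2)q\leqslant -q<-1$, so
$$\int_{\mathbb{B}^n}\frac{(1-|y|^2)^{(\alpha-2)q}}{|x-y|^{(n-2)q}}\,\dd\nu(y)=+\infty$$
for every $x$ and every $\alpha\in(0,1]$; for the $K_1$-term the same divergence occurs whenever $(\alpha-1)q\leqslant-1$, i.e. $\alpha\leqslant 1/p$. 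You cannot ``choose $p$ correspondingly large'': $p$ is prescribed by hypothesis $(\ref{h4})$, and on a bounded domain $L^p$ only embeds into \emph{smaller} exponents. Nor does splitting into an inner region and a boundary collar help, since the divergence sits in the collar, away from $x$, where the kernel $|x-y|^{-(n-1)q}$ is harmless but $(1-|y|^2)^{(\alpha-1)q}$ is not integrable.

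The paper's (admittedly terse) argument avoids this by never discarding $[x,y]$. Writing $|\psi(y)|=(1-|y|^2)^{1+\alpha}|h(y)|$ with $h\in L^p$, $p>n$, and using $(1-|x|^2)^{n-\alpha}(1-|y|^2)^{\alpha}\preceq[x,y]^n$ (a consequence of $[x,y]^2\geqslant(1-|x|^2)(1-|y|^2)$ and $[x,y]\succeq 1-|x|^2$), one gets the \emph{pointwise} bound
$$|(D_kG_h)_1(x,y)|\,(1-|y|^2)^{-n}|\psi(y)|\preceq (1-|x|^2)^{\alpha-1}\,\frac{|h(y)|}{|x-y|^{n-1}},$$
so the factor $(1-|x|^2)^{\alpha-1}$ comes out in front and the remaining integral is exactly the Riesz potential $V_{1/n}$ of $h\in L^p$, $p>n$, which is bounded by Lemma \ref{RisLema} with no constraint linking $\alpha$ and $p$. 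The kernel $(D_kG_h)_2$ needs one further pointwise observation, $[x,y]\geqslant|x-y|$, to trade its weaker singularity $|x-y|^{-(n-2)}$ and one power of $[x,y]$ for $|x-y|^{-(n-1)}$; the splitting $[x,y]^n\succeq(1-|x|^2)^{n-1-\alpha}(1-|y|^2)^{\alpha}|x-y|$ then yields the same bound $(1-|x|^2)^{\alpha-1}V_{1/n}(|h|)(x)$. You should replace the H\"older-duality computation by this weighted pointwise splitting of $[x,y]^n$; the rest of your outline (Hardy--Littlewood, recovery of Proposition \ref{MainProp} at $\alpha=1$) then goes through.
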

\begin{proof}
Since $(1-|x|^2)^{n-\alpha}(1-|y|^2)^{\alpha}\leqslant[x,y]^n$ we have that
\vspace{-2mm}
$$(1-r)^{1-\alpha} |G_{h}[\psi](r x_0)|\leqslant M_n,\quad 0\leqslant  r <1.\vspace{-2mm}$$
\end{proof}\vspace{-4mm}
\begin{thm}
Suppose that  $n\geqslant 3$ and $u\in
C^{2}(\mathbb{B}^{n},\IR^n) \cap C(\overline{\mathbb{B}^{n}},\IR^n )$,
$u$ satisfies  (h1)    and  $\Delta_h u $  satisfies  $(\ref{h4})$. Then we have the following conclusion:

If  $u$  is  $\alpha$-H\"{o}lder on $\mathbb{S}$, then $u$  is  $\alpha$-H\"{o}lder on $\overline{\mathbb{B}^n}$, for any $0<\alpha\leqslant 1$.
\end{thm}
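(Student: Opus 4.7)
The plan is to combine the Chen--Rasila representation $u = P_h[\phi] - G_h[\psi]$ from Theorem \ref{ChenRas1} with the two H\"older-type results already established in the paper. I would set $\phi := u|_{\mathbb{S}^{n-1}}$, which is $\alpha$-H\"older on $\mathbb{S}^{n-1}$ by (h1), and $\psi := \Delta_h u$, which satisfies $(\ref{h4})$. A short H\"older-inequality check shows that $(\ref{h4})$ forces the integrability condition $\int_{\mathbb{B}^n}(1-|y|^2)^{n-1}|\psi(y)|\,\dd\tau(y) < \infty$ needed to invoke the representation: writing $|\psi(y)| \leqslant (1-|y|^2)^{1+\alpha} f(y)$ with $f \in L^p(\mathbb{B}^n,\dd\nu)$ for some $p>n$, the integrand becomes $\lesssim (1-|y|^2)^{\alpha-1} f(y)$, which is integrable by H\"older with a small exponent on the weight factor.

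The Green summand is then immediate: the preceding proposition asserts that $(\ref{h4})$ implies $G_h[\psi]$ is $\alpha$-H\"older on $\overline{\mathbb{B}^n}$. For the Poisson summand $P_h[\phi]$, the key observation is that the \emph{uniform} $\alpha$-H\"older hypothesis on $\phi$ means that hypothesis (h1) of Theorem \ref{thmloc0} holds at \emph{every} boundary point $x_0\in\mathbb{S}^{n-1}$ with a common constant $M$. Applying Theorem \ref{thmloc0} at each $x_0$ yields the radial gradient estimate
$$|\nabla P_h[\phi](rx_0)| \leqslant M_n (1-r)^{\alpha-1},\qquad 0\leqslant r<1,$$
with $M_n = M_n(n,\alpha,M)$ independent of $x_0$. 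Since every interior point $x\in\mathbb{B}^n\setminus\{0\}$ lies on the radius through $x/|x|$, an orthogonal rotation upgrades this to the pointwise bound $|\nabla P_h[\phi](x)| \leqslant M_n (1-|x|)^{\alpha-1}$ throughout $\mathbb{B}^n$.

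The concluding step is the classical Hardy--Littlewood-type promotion of the gradient estimate to H\"older continuity on $\overline{\mathbb{B}^n}$: for $x,y\in\mathbb{B}^n$, integrate $|\nabla P_h[\phi]|$ along the segment $[x,y]$ and split according to whether $|x-y|$ is small or large compared to $1-\max(|x|,|y|)$, producing $|P_h[\phi](x)-P_h[\phi](y)|\leqslant C|x-y|^\alpha$ when $0<\alpha<1$; the case $\alpha=1$ is immediate because the gradient is then simply bounded. Combined with the Green piece via the triangle inequality, this establishes $u \in C^{0,\alpha}(\overline{\mathbb{B}^n})$.

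The step I expect to be the main obstacle is the \emph{uniformity} in $x_0$ of the constant in Theorem \ref{thmloc0} together with the rotation argument used to propagate the radial estimate to the whole ball: Theorem \ref{thmloc0} is phrased on the single radius $\{re_n\}$ attached to a distinguished boundary point, so one must check that the constant $c(\alpha,n)$ from Proposition \ref{prop2a} depends only on $n$ and $\alpha$, and that the orthogonal rotation sending $e_n$ to an arbitrary $x_0$ preserves both $\mathbb{B}^n$ and $\Delta_h$. Both facts are clear from the invariance properties of the hyperbolic Laplacian and from the structure of the integral estimate in Proposition \ref{prop2a}, but they should be stated explicitly to make the argument fully rigorous.
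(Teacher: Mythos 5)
Your proposal is correct and coincides with the argument the paper evidently intends (the theorem is stated there without an explicit proof): decompose $u=P_h[\phi]-G_h[\psi]$ via Theorem~\ref{ChenRas1}, dispose of the Green summand by the immediately preceding proposition, and treat the Poisson summand by applying Theorem~\ref{thmloc0} with a constant uniform in the boundary point (which the proof of Proposition~\ref{prop2a} indeed provides, since $c=c(\alpha,n)$ and the rotation sending $e_n$ to $x_0$ preserves $\Delta_h$), followed by the Hardy--Littlewood promotion of the gradient bound to H\"older continuity. One minor slip: in your integrability check the exponents $(n-1)+(1+\alpha)-n$ sum to $\alpha$, so the integrand is $\lesssim(1-|y|^2)^{\alpha}f(y)$ rather than $(1-|y|^2)^{\alpha-1}f(y)$; this makes the verification immediate and avoids the borderline H\"older estimate, which for the exponent you wrote would actually fail when $\alpha$ is small.
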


\section{Boundary behaviour of the hyperbolic harmonic function on the upper half space}

For $z\in\mathbb{R}^n$ let us write $z=(x,y)$ with $x=(x_1,\ldots,x_{n-1})$. Let us also identified $\mathbb{R}^{n-1}$ with the set $\{z\in\mathbb{R}^n: y=0\}$ and denote upper half-space as $\mathbb{H}^{n}=\{(x,y)\in\mathbb{R}^n:y>0\}$, and $V$ is the $n$-dimensional Lebesgue volume measure on $\mathbb{R}^n$.\\
Now, let us introduce the hyperbolic Poisson's kernel on $\mathbb{H}^{n}$ as
$$P_h(x,y)=\frac{2}{n\omega_n}\left(\frac{y}{|x|^2+y^2}\right)^{n-1}.$$
The Poisson's intgeral formula for the upper half-space asserts that if $f$ is bounded continuous function on $\mathbb{R}^{n-1}$, then the function $u$ defined by \vspace{-1mm}
\begin{equation}\label{HypPoiss1}
    u(x,y)=\frac{2}{n\omega_n}\int_{\mathbb{R}^{n-1}}P_h(x-t,y) f(t)\dd V(t)\vspace{-1mm}
\end{equation}
is hyperbolic harmonic for $y>0$. The constant $\omega_n$ in the formula denotes the volume of the unit ball in $\mathbb{R}^n$. For more details, see \cite[Theorem 5.6.2]{stoll}. In this case, we will shortly write $u=P_h[f]$ in $\mathbb{H}^n$.

Now we will state the lemma, proof is analogous to the proof of {\cite[Lemma 6.1.]{LiTam2}}

\begin{lem}
Let $f$ be a continuous function with compact support in $\mathbb{R}^{n-1}$, and $u$ be its hyperbolic harmonic extension given by (\ref{HypPoiss1}). Then $u$ satisfies
$$
\lim_{y \to 0} y\frac{\partial u}{\partial x_i}(x,y) =0 \quad \mbox{for} \quad 1\leqslant i\leqslant n-1,
\quad\mbox{and}\quad  \lim_{y \to 0} y\frac{\partial u}{\partial y}(x,y) =0,\vspace{-1mm}
$$
and the convergence is uniform in $x\in\mathbb{R}^{n-1}$. If, in addition, $f\in C^1$ then\vspace{-1mm}
$$
\frac{\partial u}{\partial x_i}(x,y)=\frac{2}{n \omega_n} \int_{\mathbb{R}^{n-1}} \left(\frac{y}{|x-t|^2+y^2}\right)^{n-1} \frac{\partial f}{\partial t_i}(t) \, \mathrm{d} V(t)\vspace{-1mm}
$$
for all $1\leqslant i\leqslant n-1$, where $t=(t_1,\ldots,t_{n-1})\in \mathbb{R}^{n-1}$. Hence $\frac{\partial u}{\partial x_i}$ is continuous up to the boundary given by $y=0$ for $1\leqslant i\leqslant n-1$.  Moreover,\vspace{-1mm}
$$
\lim_{y\to 0}\frac{\partial u}{\partial x_i}(x,y)=\frac{\partial f}{\partial x_i}(x).
$$


\end{lem}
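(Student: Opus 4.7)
The plan is to treat the three assertions in sequence using explicit differentiation of the hyperbolic Poisson kernel followed by a dilation substitution. First, one computes
\[
\partial_{x_i}P_h(x-t,y) = -\frac{4(n-1)}{n\omega_n}\,\frac{(x_i-t_i)\,y^{n-1}}{(|x-t|^2+y^2)^n},
\]
\[
\partial_y P_h(x-t,y) = \frac{2(n-1)}{n\omega_n}\,\frac{y^{n-2}(|x-t|^2-y^2)}{(|x-t|^2+y^2)^n}.
\]
Since $f$ is continuous with compact support and these kernels together with their first derivatives are smooth on $\{t\neq x,\,y>0\}$ and integrable in $t$, differentiation under the integral in (\ref{HypPoiss1}) is routine. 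To establish the first two limits I would perform the dilation $t=x+ys$, under which $\dd V(t)=y^{n-1}\dd V(s)$ and $|x-t|^2+y^2=y^2(1+|s|^2)$. All positive powers of $y$ then cancel and, for explicit dimensional constants $K_1,K_2$, one obtains
\[
y\,\partial_{x_i}u(x,y) = K_1\int_{\mathbb{R}^{n-1}}\frac{s_i\,f(x+ys)}{(1+|s|^2)^n}\,\dd V(s),
\]
\[
y\,\partial_y u(x,y) = K_2\int_{\mathbb{R}^{n-1}}\frac{(|s|^2-1)\,f(x+ys)}{(1+|s|^2)^n}\,\dd V(s).
\]

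The first kernel is odd in $s_i$, so it integrates to zero; the second kernel also integrates to zero, which I would obtain by observing that $\int P_h(x-t,y)\,\dd V(t)$ is (by the same dilation) independent of both $x$ and $y$, and then differentiating in $y$ under the integral. Subtracting $f(x)$ times these vanishing integrals yields
\[
y\,\partial_{x_i}u(x,y) = K_1\int_{\mathbb{R}^{n-1}}\frac{s_i\,[f(x+ys)-f(x)]}{(1+|s|^2)^n}\,\dd V(s),
\]
and similarly for $y\,\partial_y u$. The kernels decay like $|s|^{1-2n}$ and $|s|^{2-2n}$ in $\mathbb{R}^{n-1}$, both integrable for $n\geqslant 2$. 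Splitting the integral into $|s|<R$ (controlled by the uniform continuity of $f$) and $|s|\geqslant R$ (controlled by the kernel tail together with $\|f\|_\infty<\infty$), one obtains convergence to zero that is uniform in $x\in\mathbb{R}^{n-1}$.

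For the second part of the lemma I use the identity $\partial_{x_i}P_h(x-t,y)=-\partial_{t_i}P_h(x-t,y)$. Since $f\in C^1_c(\mathbb{R}^{n-1})$, integration by parts in the variable $t_i$ produces no boundary term and yields the displayed representation for $\partial_{x_i}u$. The boundary limit $\partial_{x_i}u(x,y)\to\partial_{x_i}f(x)$ as $y\to 0^+$ then follows from the approximate identity property of $P_h$ in $\mathbb{H}^n$ (the hyperbolic analogue of the classical Poisson boundary theorem, see \cite{stoll}) applied to the continuous compactly supported function $\partial_{t_i}f$; since the convergence is locally uniform in $x$, $\partial_{x_i}u$ extends continuously to $\{y=0\}$. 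The principal obstacle I anticipate is the limit $y\,\partial_y u\to 0$: the odd-symmetry argument that trivializes the $y\,\partial_{x_i}u$ case does not apply here, and one needs the identity $\int_{\mathbb{R}^{n-1}}(|s|^2-1)(1+|s|^2)^{-n}\,\dd V(s)=0$; the cleanest route is the differentiation of the $P_h$-normalization described above, which avoids a direct computation in spherical coordinates.
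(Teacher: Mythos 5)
Your proof is correct and complete. The paper itself gives no argument for this lemma --- it only remarks that the proof is analogous to Lemma~6.1 of Li--Tam \cite{LiTam2} --- and your computation (explicit differentiation of the kernel, the dilation $t=x+ys$ which cancels all powers of $y$, the odd-symmetry and normalization identities to subtract $f(x)$, the split $|s|<R$ versus $|s|\geqslant R$ using uniform continuity, and integration by parts plus the approximate-identity property for the $C^1$ case) is exactly the standard argument of that type, so you have in effect supplied the proof the paper delegates to the reference.
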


Next, let us state the following condition:\vspace{-3mm}
\begin{itemize}
    \item[$(i)$] $f$ belongs to $C^1(\mathbb{R}^{n-1})$ and has compact support.\vspace{-2mm}
\end{itemize}

Before we state the main result of this section, let us introduce the following
\begin{lem}\label{UpperHypIntEst}
If we denote\vspace{-2mm} $$I_s^{\alpha}(y)=\int_{\mathbb{R}^{n-1}}\frac{|x|^{\alpha}}{(|x|^2+y^2)^{\frac{s}{2}}}\dd V(x),\vspace{-2mm}$$ we have that $I_s^{\alpha}(y)\preceq \frac{1}{y^{s-n+1-\alpha}}$, for $s>n-1$ and $0<\alpha\leqslant 1$.
\end{lem}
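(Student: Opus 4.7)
The plan is to reduce $I_s^\alpha(y)$ to a one-dimensional integral via polar coordinates on $\mathbb{R}^{n-1}$ and then extract the full $y$-dependence by a scaling substitution. Since the integrand depends only on $|x|$, writing $r=|x|$ and using that the $(n-1)$-dimensional Lebesgue measure factors as $\mathrm{d}V(x)=\sigma_{n-2}\,r^{n-2}\,\mathrm{d}r$ (where $\sigma_{n-2}$ is the surface area of $\mathbb{S}^{n-2}$), the integral reduces to
$$I_s^\alpha(y)=\sigma_{n-2}\int_0^\infty\frac{r^{\alpha+n-2}}{(r^2+y^2)^{s/2}}\,\mathrm{d}r.$$

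The key step is the scaling change of variables $r=yu$, under which $r^2+y^2=y^2(1+u^2)$ and $\mathrm{d}r=y\,\mathrm{d}u$. This cleanly separates the $y$-dependence from an auxiliary dimensionless integral:
$$I_s^\alpha(y)=\sigma_{n-2}\,y^{\alpha+n-1-s}\int_0^\infty\frac{u^{\alpha+n-2}}{(1+u^2)^{s/2}}\,\mathrm{d}u,$$
so the entire $y$-dependence is captured in the prefactor $y^{-(s-n+1-\alpha)}$. This mirrors the substitution $\theta=(1-r)u$ used in Proposition~\ref{prop2a} and Lemma~\ref{IntLem1}, and it is exactly this kind of scaling that makes such one-dimensional integrals yield optimal growth estimates.

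The main (and only substantive) obstacle is the convergence check for the $u$-integral. Near $u=0$, the integrand is comparable to $u^{\alpha+n-2}$, which is integrable since $n\geqslant 2$ and $\alpha>0$ force $\alpha+n-2>-1$. Near $u=+\infty$, the integrand is comparable to $u^{\alpha+n-2-s}$, and integrability at infinity is precisely what pins down the range of $s$ (namely $s>n-1+\alpha$, which is consistent with the regime of interest since $0<\alpha\leqslant 1$ and the estimate will be applied for $s$ large enough to make the exponent $s-n+1-\alpha$ positive). Denoting the resulting constant by $C=C(n,s,\alpha)$, we arrive at $I_s^\alpha(y)\leqslant C\,y^{-(s-n+1-\alpha)}$, which is the claimed bound.
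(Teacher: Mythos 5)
Your proof is correct, and in fact the paper states Lemma~\ref{UpperHypIntEst} without any proof, so there is nothing to compare against; the polar-coordinates reduction $\dd V(x)=\sigma_{n-2}r^{n-2}\dd r$ followed by the scaling $r=yu$ is the standard argument, and it actually yields the identity
$I_s^{\alpha}(y)=\sigma_{n-2}\,y^{-(s-n+1-\alpha)}\int_0^{\infty}u^{\alpha+n-2}(1+u^2)^{-s/2}\dd u$,
not merely an upper bound. You are also right to flag the convergence condition: the stated hypothesis $s>n-1$ is not sufficient, since integrability at infinity requires $s>n-1+\alpha$ (for $n-1<s\leqslant n-1+\alpha$ the integral is infinite while the claimed majorant is finite, so the lemma as written is slightly too generous). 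This is harmless for the paper's only application, namely $s=2(n-1)$, $\alpha=1$, $n\geqslant 3$, where $2(n-1)>n=n-1+\alpha$ holds and the exponent $s-n+1-\alpha=n-2$ matches the displayed special case.
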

Special case of Lemma \ref{UpperHypIntEst} gives that there exist $M_1(n)>0$ such that\vspace{-2mm} $$y^{n-2}\int_{\mathbb{R}^{n-1}} \frac{|x|}{(|x|^2+y^2)^{n-1}}\dd V(x)\leqslant M_1(n), y>0.\vspace{-2mm}$$
\begin{lema}(\cite{Ru})\label{Sphere}
If $f$ is a continuous and radial function (i.e.
$f(x) = \tilde{f}(|x|))$ on the closed ball of radius $R$ in $\mathbb{R}^n$, centered at the origin, then
$$\int_{B(R)}f(x)\dd V(x)=\sigma_{n-1}\int_0^R \tilde{f}(r)r^{n-1}\dd r.$$

\end{lema}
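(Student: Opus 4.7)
The plan is to change variables from Cartesian to spherical coordinates and then apply Fubini's theorem. First I would introduce the diffeomorphism $\Phi\colon (0,R]\times\mathbb{S}^{n-1}\to B(R)\setminus\{0\}$ given by $\Phi(r,\omega)=r\omega$. A direct computation of the differential shows that $d\Phi$ sends the radial unit vector to $\omega$ and each tangent vector $v\in T_\omega\mathbb{S}^{n-1}$ to $rv$, so the Jacobian determinant equals $r^{n-1}$. Equivalently, the Lebesgue volume measure on $\mathbb{R}^n$ factors as $\dd V(x)=r^{n-1}\,\dd r\,\dd S(\omega)$, where $\dd S$ denotes the standard (unnormalized) surface measure on $\mathbb{S}^{n-1}$, which by definition satisfies $\int_{\mathbb{S}^{n-1}}\dd S=\sigma_{n-1}$.

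With this factorization in hand, I would note that the origin has Lebesgue measure zero, and $f$ is continuous on $\overline{B(R)}$, hence bounded and measurable. By the change of variables formula and Fubini's theorem,
$$\int_{B(R)}f(x)\,\dd V(x)=\int_0^R\!\int_{\mathbb{S}^{n-1}} f(r\omega)\,r^{n-1}\,\dd S(\omega)\,\dd r.$$
The radial hypothesis $f(r\omega)=\tilde{f}(r)$ makes the integrand in the inner integral independent of $\omega$, so the inner integral equals $\tilde{f}(r)r^{n-1}\sigma_{n-1}$. Pulling the constant $\sigma_{n-1}$ outside the outer integral yields the stated identity.

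The only step requiring care is the Jacobian computation itself. I would justify it either by parameterising $\mathbb{S}^{n-1}$ locally via recursive spherical coordinates $(\theta_1,\ldots,\theta_{n-1})$, whose Jacobian relative to $r$ is $r^{n-1}\sin^{n-2}\theta_1\sin^{n-3}\theta_2\cdots\sin\theta_{n-2}$, with the angular factor being precisely the induced volume form on $\mathbb{S}^{n-1}$; or by appealing to the coarea formula applied to the smooth function $x\mapsto|x|$, whose level sets are the spheres $S(r)$ and whose gradient has unit norm, yielding $\dd V=r^{n-1}\,\dd r\,\dd S$ on $\mathbb{R}^n\setminus\{0\}$ without further calculation. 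Either route is routine and completes the proof; no additional hypotheses beyond continuity of $f$ are needed.
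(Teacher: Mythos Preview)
Your proof is correct and is the standard polar-coordinates argument. Note that the paper does not actually prove this lemma: it is stated with a citation to \cite{Ru} (Rudin) and used as a black box, so there is no ``paper's own proof'' to compare against. Your argument is essentially what one finds in Rudin or any standard reference.
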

Now, we can formulate the next Theorem, which will be important for proof of the main result of this section
\begin{thm}\label{thmNorDer}
Let $f$ satisfies condition $(i)$ and, let, in addition, function $f$ be differentiable at the point $0$. If $u$ is defined as (\ref{HypPoiss1}), then \vspace{-2mm}
$$\lim_{y\to 0}\frac{\partial u}{\partial y}(0,y)=0.\vspace{-1mm}$$
\end{thm}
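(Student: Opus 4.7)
My plan is to reduce the question to an estimate on the difference quotient after canceling the zeroth- and first-order terms of $f$ at the origin. First, since $f\in C^1_c(\mathbb{R}^{n-1})$, differentiation under the integral sign is immediate and gives
$$\frac{\partial u}{\partial y}(0,y) \;=\; C_n\int_{\mathbb{R}^{n-1}}\frac{y^{n-2}(|t|^2-y^2)}{(|t|^2+y^2)^n}\,f(t)\,\mathrm{d}V(t),$$
where $C_n=\tfrac{2(n-1)}{n\omega_n}\cdot\tfrac{2}{n\omega_n}$ and I have used the formula for $\partial_y P_h$.

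The key step is to exploit two symmetry identities for the kernel $K(t,y):=\frac{y^{n-2}(|t|^2-y^2)}{(|t|^2+y^2)^n}$. Namely, $K$ is radial in $t$, so by oddness $\int K(t,y)\,t_i\,\mathrm{d}V(t)=0$ for every $i$ (and every $y>0$), once we check absolute convergence. Moreover, the substitution $t=ys$ shows $\int_{\mathbb{R}^{n-1}}P_h(t,y)\,\mathrm{d}V(t)$ is independent of $y$, hence $\int K(t,y)\,\mathrm{d}V(t)=0$. These two facts allow me to subtract both $f(0)$ and the linear part $\nabla f(0)\cdot t$ inside the integral:
$$\frac{\partial u}{\partial y}(0,y)=C_n\int_{\mathbb{R}^{n-1}}K(t,y)\,\bigl[f(t)-f(0)-\nabla f(0)\cdot t\bigr]\,\mathrm{d}V(t).$$
Absolute convergence with the $\nabla f(0)\cdot t$ term added in is guaranteed by the bound $|K(t,y)|\leqslant \frac{y^{n-2}}{(|t|^2+y^2)^{n-1}}$ together with Lemma~\ref{UpperHypIntEst}.

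The third step is an $\varepsilon$--$\delta$ split. Given $\varepsilon>0$, differentiability of $f$ at $0$ provides $\delta>0$ with $|f(t)-f(0)-\nabla f(0)\cdot t|\leqslant \varepsilon|t|$ for $|t|<\delta$. Writing the integral as $I_{\mathrm{in}}+I_{\mathrm{out}}$ over $\{|t|<\delta\}$ and $\{|t|\geqslant\delta\}$, the inner piece is controlled by
$$|I_{\mathrm{in}}|\leqslant \varepsilon\,C_n\int_{\mathbb{R}^{n-1}}\frac{y^{n-2}|t|}{(|t|^2+y^2)^{n-1}}\,\mathrm{d}V(t)=\varepsilon\, C_n\, y^{n-2}\, I_{2n-2}^{1}(y)\leqslant C\varepsilon,$$
by Lemma~\ref{UpperHypIntEst} (applied with $s=2n-2$, $\alpha=1$, which gives exactly the cancellation $y^{n-2}\cdot y^{-(n-2)}=1$). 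For the outer piece I use that $f$ has compact support, so $|f(t)-f(0)-\nabla f(0)\cdot t|\lesssim 1+|t|$ and the bound $(|t|^2+y^2)^{n-1}\geqslant |t|^{2n-2}$, yielding
$$|I_{\mathrm{out}}|\lesssim y^{n-2}\int_{|t|\geqslant \delta}\frac{1+|t|}{|t|^{2n-2}}\,\mathrm{d}V(t)\;=\;O\bigl(y^{n-2}/\delta^{n-3}\bigr),$$
which tends to $0$ as $y\to 0^+$ for fixed $\delta$ (recall $n\geqslant 3$). Hence $\limsup_{y\to 0^+}|\partial_y u(0,y)|\leqslant C\varepsilon$, and letting $\varepsilon\to 0$ yields the conclusion.

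The main obstacle I anticipate is the verification of the two symmetry identities with the correct absolute-convergence justification, together with the bookkeeping needed so that the $\nabla f(0)\cdot t$ cancellation is legitimate even though this function has no compact support. Once those are in place, everything reduces to the sharp integral estimate in Lemma~\ref{UpperHypIntEst}, whose exponents are tuned precisely so that the $y^{n-2}$ prefactor exactly compensates $I_{2n-2}^{1}(y)$.
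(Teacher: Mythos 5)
Your proof is correct and follows essentially the same route as the paper's: the same kernel bound $|\partial_y P_h(x,y)|\preceq y^{n-2}(|x|^2+y^2)^{1-n}$, the same $\varepsilon$--$\delta$ split, the same oddness cancellation of the linear term, and the same application of Lemma~\ref{UpperHypIntEst}, the only cosmetic difference being that you subtract $\nabla f(0)\cdot t$ globally (justified by absolute convergence) while the paper cancels it only over the symmetric ball $B(\delta)$ and keeps $f(x)-f(0)$ on the complement. (A trivial slip: the outer estimate should be $O\bigl(y^{n-2}/\delta^{n-1}\bigr)$ rather than $O\bigl(y^{n-2}/\delta^{n-3}\bigr)$, which does not affect the conclusion since the term is still $O(y^{n-2})$ for fixed $\delta$ and $n\geqslant 3$.)
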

\begin{proof}
After an easy computation, we can get\vspace{-2mm}
$$\frac{\partial}{\partial y}P_h(x,y)=\frac{2(n-1)}{n\omega_n}\frac{y^{n-2}}{(|x|^2+y^2)^{n-1}}\frac{|x|^2-y^2}{|x|^2+y^2}.$$
If we denote $q(x,y):=\dfrac{|x|^2-y^2}{|x|^2+y^2}$, we can see that $|q(z)|\leqslant 1$ on $\mathbb{H}^{n}$. This gives us that\vspace{-2mm}
\begin{equation}\label{HalfPoisNormEst}
\left|\frac{\partial}{\partial y}P_h(x,y)\right|\preceq K_n(z):=\frac{y^{n-2}}{(|x|^2+y^2)^{n-1}}\mbox{ for every }z\in\mathbb{H}^{n}.\vspace{-2mm}
\end{equation}
Let $M_1(n)$ be as in Lemma \ref{UpperHypIntEst}. By the assumption, for every $\epsilon=\epsilon(0)>0$ there exists $\delta=\delta(0)>0$ such that\vspace{-2mm}
$$f(x)-f(0)=a_1x_1+\ldots+a_{n-1}x_{n-1}+\epsilon(x)|x|,\mbox{ where } |\epsilon(x)|<\frac{\epsilon}{2M_1(n)}\frac{n\omega_n}{2}\mbox{ if } |x|<\delta.\vspace{-2mm}$$
Similarly, as in the proof of Theorem \ref{thmloc0}, we have\vspace{-2mm}
\begin{align*}
\frac{\partial}{\partial y}u(0,y)& =\frac{2}{n\omega_n}\int_{\mathbb{R}^{n-1}}\frac{\partial}{\partial y}P_h(x,y)(f(x)-f(0))\dd V(x)=\\
&= \frac{2}{n\omega_n}\int_{B(\delta)}\frac{\partial}{\partial y}P_h(x,y)(a_1x_1+\ldots+a_{n-1}x_{n-1}+\epsilon(x)|x|)\dd V(x)+\\
&+\frac{2}{n\omega_n}\int_{B(\delta)^c}\frac{\partial}{\partial y}P_h(x,y)(f(x)-f(0))\dd V(x).\vspace{-2mm}
\end{align*}
Since $x_k\frac{\partial}{\partial y}P_h(x,y)$ is odd function of the variable $x_k$ and ball $B(\delta)$ is symmetric domain with respect to hyperplane $x_k=0$, for all $1\leqslant k\leqslant n-1$, we have\vspace{-2mm}
$$\frac{\partial}{\partial y}u(0,y)=\frac{2}{n\omega_n}\left(\int_{B(\delta)}\frac{\partial}{\partial y}P_h(x,y)\epsilon(x)|x|\dd V(x)+\int_{B(\delta)^c}\frac{\partial}{\partial y}P_h(x,y)(f(x)-f(0))\d dV(x)\right).\vspace{-2mm}$$
If we define\vspace{-2mm} $$I_{1,n}(y){:=}\int_{B(\delta)}\left|\frac{\partial}{\partial y}P_h(x,y)\right| |\epsilon(x)||x|\dd V(x),\ \  I_{2,n}(y){:=}\int_{B(\delta)^c}\left|\frac{\partial}{\partial y}P_h(x,y)\right| |f(x){-}f(0)|\dd V(x),\vspace{-2mm}$$
we have that $\left|\frac{\partial}{\partial y}u(0,y)\right|\preceq I_{1,n}(y)+I_{2,n}(y)$.
Now, using inequality \ref{HalfPoisNormEst} we get\vspace{-2mm}
\begin{equation}\label{NorEst1}
I_{1,n}(y)\leqslant\epsilon y^{n-2}\int_{\mathbb{R}^{n-1}} \frac{|x|}{(|x|^2+y^2)^{n-1}}\dd V(x)<\frac{\epsilon}{2}\frac{n\omega_n}{2}.\vspace{-2mm}
\end{equation}
Also, we have that $I_{2,n}(y)\preceq y^{n-2}\int_{|x|\geqslant\delta}\frac{\dd V(x)}{(|x|^2+y^2)^{n-1}}.$ Let us denote $J_{\delta,n}(y):=\int_{|x|\geqslant\delta}\frac{\dd V(x)}{(|x|^2+y^2)^{n-1}}.$ After introducing change of variables $x=yt$, we get that\vspace{-3mm}
$$J_{\delta,n}(y){=}\frac{1}{y^{n{-}1}}\int_{|t|\geqslant\frac{\delta}{y}}\frac{\dd V(t)}{(1{+}|t|^2)^{n-1}}{=}\frac{\sigma_{n{-}1}}{y^{n{-}1}}\int\limits_{\delta/y}^{+\infty}\frac{r^{n{-}2}}{(1{+}r^2)^{n-1}}\dd r{=}\left\lceil\rho{=}\frac 1r\right\rfloor{=}\frac{\sigma_{n-1}}{y^{n-1}}\int\limits_{0}^{y/\delta}\frac{\rho^{n-2}}{(1{+}\rho^2)^{n-1}}\dd \rho.$$
Since $1+\rho^2\geqslant 1$ we have that $J_{\delta,n}(y)\preceq \frac{1}{\delta^{n-1}}.$ This means that\vspace{-2mm}
\begin{equation*}
I_{2,n}(y)\leqslant M_2(f,n)\frac{y^{n-2}}{\delta^{n-1}}<\frac{\epsilon}{2}\frac{n\omega_n}{2},\vspace{-2mm}
\end{equation*}
if $0<y<\left(\frac{\epsilon\delta^{n-1}}{2M_2(f,n)}\frac{n\omega_n}{2}\right)^{1/(n-2)}=:\delta_0=\delta_0(\epsilon(0),\delta(0))$. Finally, we have\vspace{-3mm}
\begin{equation}\label{NorEst2}
\left|\frac{\partial}{\partial y}u(0,y)\right|<\epsilon,\mbox{ for } 0<y<\delta_0.\vspace{-2mm}
\end{equation}
\end{proof}
\begin{lem}\label{NorDerCont}
Let $f$ and $u$ be functions defined as in Theorem \ref{thmNorDer} and let $f$ be differentiable in the neighbourhood of the point $0$ and let every partial derivative of function $f$ be continuous at point $0$. Then, for every $\epsilon>0$ there exist $\eta,\delta_0>0$ such that,  \vspace{-2mm}
$$\left|\frac{\partial}{\partial y} u(x,y)\right|<\epsilon,\mbox{ for every } |x|<\eta, 0<y<\delta_0.\vspace{-2mm}$$
In fact, $\lim\limits_{\substack{z\to 0,\\ z\in\mathbb{H}^n}}\frac{\partial}{\partial y} u(z)=0.$
\end{lem}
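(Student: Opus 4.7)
The plan is to adapt the proof of Theorem~\ref{thmNorDer} by shifting the reference point from $0$ to a variable point $x$ near $0$, using the hypothesis of continuity of $\nabla f$ at $0$ to convert the pointwise estimates at $0$ into estimates that are uniform in $x$. Using that $\int_{\mathbb{R}^{n-1}}\partial_y P_h(x-t,y)\dd V(t)=0$ (a consequence of $\int P_h(x-t,y)\dd V(t)$ being independent of $(x,y)$), one writes
\begin{equation*}
\frac{\partial u}{\partial y}(x,y)=\frac{2}{n\omega_n}\int_{\mathbb{R}^{n-1}}\frac{\partial P_h}{\partial y}(x-t,y)\bigl(f(t)-f(x)\bigr)\dd V(t),
\end{equation*}
and then splits the integral at the ball $B(x,\delta)$ centered at $x$, exactly as in Theorem~\ref{thmNorDer}.

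Given $\epsilon>0$, continuity of $\nabla f$ at $0$ allows me to pick $\eta'>0$ such that $|\nabla f(\xi)-\nabla f(0)|<c\epsilon$ for $|\xi|<\eta'$, where $c$ is a small positive constant to be fixed below. Set $\delta=\eta'/2$ and restrict attention to $|x|<\delta$. For any $t$ with $|t-x|<\delta$, the segment from $x$ to $t$ lies in $B(0,\eta')$, so the integral form of the mean value theorem gives
\begin{equation*}
f(t)-f(x)=\nabla f(0)\cdot(t-x)+E(t,x),\qquad|E(t,x)|\leqslant c\epsilon\,|t-x|,
\end{equation*}
with $c\epsilon$ independent of $x$. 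In the integral over $B(x,\delta)$, the linear contribution $\nabla f(0)\cdot(t-x)$ vanishes by the odd-function symmetry already used in Theorem~\ref{thmNorDer} (now applied to the symmetric ball $B(x,\delta)$: after the change of variables $s=t-x$, the function $s_i\,\partial_y P_h(-s,y)$ is odd in $s_i$). The remainder is dominated via (\ref{HalfPoisNormEst}) by a constant multiple of
\begin{equation*}
c\epsilon\,y^{n-2}\int_{\mathbb{R}^{n-1}}\frac{|s|}{(|s|^2+y^2)^{n-1}}\dd V(s),
\end{equation*}
which, by Lemma~\ref{UpperHypIntEst} applied with the exponents $2(n-1)$ and $\alpha=1$, is itself a constant multiple of $c\epsilon$; choosing $c$ small enough makes this piece smaller than $\epsilon/2$, uniformly in $x$ and $y$.

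For the exterior piece, compact support of $f$ gives $|f(t)-f(x)|\leqslant 2\|f\|_\infty$ uniformly in $x,t$, and the computation of $J_{\delta,n}(y)$ from the proof of Theorem~\ref{thmNorDer} yields $y^{n-2}\int_{|t-x|\geqslant\delta}(|t-x|^2+y^2)^{-(n-1)}\dd V(t)\preceq y^{n-2}/\delta^{n-1}$, which tends to $0$ as $y\to 0^+$ since $n\geqslant 3$. Hence this piece is also $<\epsilon/2$ for all $y$ below some threshold $\delta_0=\delta_0(\epsilon,\delta,\|f\|_\infty,n)$ depending only on the listed data, giving $|\frac{\partial u}{\partial y}(x,y)|<\epsilon$ uniformly in $|x|<\delta$ for $0<y<\delta_0$, which is the assertion of the lemma. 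The main obstacle is exactly the uniformity in $x$ of the Taylor expansion: Theorem~\ref{thmNorDer} relied on differentiability at the single point $0$, which gives a remainder rate tied to that point; upgrading to a rate valid for all $x$ in a neighborhood requires the continuity of the partial derivatives at $0$, which the mean value theorem converts into the $x$-uniform bound displayed above. Once this uniformity is secured, the rest of the argument is a direct transcription of the estimates already carried out in Theorem~\ref{thmNorDer}.
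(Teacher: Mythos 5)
Your proposal is correct and follows essentially the same route as the paper: both proofs upgrade the pointwise Taylor remainder at $0$ to one that is uniform for $|x|<\eta$ by exploiting continuity of the partial derivatives at $0$ (you via the mean value theorem along the segment, the paper via a coordinate-wise telescoping with the one-dimensional mean value theorem), and then rerun the two-piece estimate of Theorem~\ref{thmNorDer} with base point $x$, observing that $\delta_0$ depends only on $\epsilon$, $\delta$, $\|f\|_\infty$ and $n$. The only cosmetic difference is that you take $\nabla f(0)\cdot(t-x)$ rather than $\nabla f(x)\cdot(t-x)$ as the linear part killed by odd symmetry, which changes nothing.
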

\begin{proof}
First, for every $\epsilon>0$ there exist $\delta,\eta>0$, such that\vspace{-2mm}
$$f(x+h)-f(x)=\frac{\partial}{\partial x_1}f(x)h_1+\ldots+\frac{\partial }{\partial x_{n-1}}f(x)h_{n-1}+\epsilon(x,h)|h|,\mbox{ and } |\epsilon(x,h)|{<}\epsilon\mbox{ if } |x|{<}\eta,|h|{<}\delta.\vspace{-2mm}$$
This can easily be proved, using the Mean Value Theorem \cite[Corollary 10.2.9]{ttaoA2}. Namely, we use that, there are $\eta>0$ and $\delta>0$(which does not depend of $s,t$) such that\vspace{-2mm}
$$|f(t+h_ie_i)-f(t)-\frac{\partial }{\partial x_i}f(0)h_i|<\frac{\epsilon}{2(n-1)}|h_i|\mbox{ for } |t|<\eta,|h_i|<\delta,1\leqslant i\leqslant n-1. \vspace{-2mm}$$
Note that, in this case, we need that $\left|\frac{\partial }{\partial x_i}f(s)-\frac{\partial }{\partial x_i}f(0)\right|<\frac{\epsilon}{2(n-1)}$, for all $|s|<\eta+\delta$. Here, $e_1,e_2,\ldots e_{n-1}$ is the standard orthonormal base of $\mathbb{R}^{n-1}$. Now, we are left only to use the fact that\vspace{-2mm}
 \begin{align*}
     f(x{+}h){-}f(x)=&(f(x{+}h){-}f(x{+}h{-}h_1e_1))+\\
     &+(f(x{+}h{-}h_1e_1){-}f(x{+}h{-}h_1e_1{-}h_2e_2)){+}\ldots{+}\\
     &+(f(x{+}h_{n-1}e_{n-1}{-}f(x)).\vspace{-2mm}
 \end{align*}
 and,\vspace{-2mm}
 \begin{align*}
     &\frac{\left|f(x{+}h){-}f(x){-}\frac{\partial}{\partial x_1}f(x)h_1{-}\ldots{-}\frac{\partial }{\partial x_{n-1}}f(x)h_{n-1}\right|}{|h|}\leqslant\\
     \frac{1}{|h|}&\left\{ \left|f(x{+}h){-}f(x{+}h{-}{h_1e_1})-{\frac{\partial }{\partial x_1}}f(0)h_1\right|{+}\left|{\frac{\partial }{\partial x_1}}f(0){-}\frac{\partial }{\partial x_1}f(x)\right||h_1|\right.+\\
     &{+}\left|f(x{+}h{-}h_1e_1){-}f(x{+}h{-}{h_1e_1}{-}{h_2e_2})-{\frac{\partial }{\partial x_2}f(0)}h_2\right|+\\
     &+\left|\frac{\partial }{\partial x_2}f(0){-}\frac{\partial }{\partial x_2}f(x)\right||h_2|+\ldots+\\
     &+\left|f(x{+}h_{n-1}e_{n-1}){-}f(x){-}\frac{\partial}{\partial x_{n-1}}f(0)h_{n-1}\right|{+}\\
     &\left.+\left|{\frac{\partial}{\partial x_{n-1}}}f(0){-}{\frac{\partial}{\partial x_{n-1}}}f(x)\right||h_{n-1}|\right\}\leqslant\\
     &\leqslant\frac{\epsilon(|h_1|+\ldots+|h_{n-1}|)}{(n-1)|h|}<\epsilon, \mbox{ for } |x|<\eta, |h|<\delta.
 \end{align*}
Now, we use that in Equation \ref{NorEst2}, $\delta_0$ depends of $\delta$, which has been chosen from the definition of diffenetiability of function $f$ at $0$. Hence, we found an $\eta$-neighbourhood of point $0$ in which $\delta(x)$, from definition of differentiability of $f$ at $x$, actually, does not depend of $x$, we can use the proof of Theorem \ref{thmNorDer} to get our result.
\end{proof}
From the Lemma \ref{NorDerCont} we can conclude that
\begin{thm}
Let $f\in C_c^1(\mathbb{R}^{n-1})$ and $u=P_h[f]$. Then $u\in C^1(\overline{\mathbb{H}^{n}}).$
\end{thm}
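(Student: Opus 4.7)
The plan is to extend $u$ to $\overline{\mathbb{H}^n}$ by setting $u(x,0):=f(x)$, and then to prove that $u$ and each of its first-order partial derivatives extend continuously to the boundary. All four continuity statements follow directly from results already established in this section; the core of the argument is simply to assemble them.

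For continuity of $u$ itself, I would use that $f\in C_c(\mathbb{R}^{n-1})\subset L^\infty\cap C$, so $P_h(\cdot,y)$ acts as an approximate identity as $y\to 0^+$ and $u(x,y)\to f(x_0)$ whenever $(x,y)\to(x_0,0)$ with $y>0$. For $x_0$ far from $\supp f$ this is even more transparent, since $f$ vanishes in a neighborhood of $x_0$ and the pointwise decay of $P_h$ together with the compact support of $f$ forces $u(x,y)\to 0=f(x_0)$. For the tangential derivatives, the preceding lemma (the analogue of Lemma~6.1 in \cite{LiTam2}) applies because $f\in C^1$, and gives that each $\partial u/\partial x_i$, $1\leqslant i\leqslant n-1$, extends continuously to $\overline{\mathbb{H}^n}$ with boundary value $\partial f/\partial x_i(x)$.

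For the normal derivative I would fix an arbitrary boundary point $(x_0,0)$. Since $f\in C_c^1(\mathbb{R}^{n-1})$, $f$ is differentiable in a neighborhood of $x_0$ and its partials are continuous at $x_0$, so the hypotheses of Lemma \ref{NorDerCont} are met at $x_0$. The proof of Lemma \ref{NorDerCont} is translation invariant (the kernel $P_h(x-t,y)$ depends only on $x-t$, so one may re-center the splitting of the integral at $x_0$ instead of at $0$, replacing the increment $f(x)-f(0)$ by $f(x_0+\xi)-f(x_0)$), and therefore yields $\lim_{z\to(x_0,0)}\partial u/\partial y(z)=0$. Setting $\partial u/\partial y(x,0):=0$ then produces the required continuous extension of the normal partial on all of $\overline{\mathbb{H}^n}$.

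Putting the three pieces together, $u$ and each of its first-order partials admits a continuous extension to $\overline{\mathbb{H}^n}$, so $u\in C^1(\overline{\mathbb{H}^n})$. The main point meriting explicit verification is the translation step for Lemma \ref{NorDerCont}; this is routine because its proof uses only local information about $f$ near the reference point together with the translation invariance of $P_h$, so no new estimate is needed — and for $x_0$ outside a neighborhood of $\supp f$ every bound becomes trivial.
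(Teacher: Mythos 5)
Your proposal is correct and follows essentially the same route as the paper, which deduces the theorem directly from the tangential-derivative lemma (the analogue of Lemma~6.1 in \cite{LiTam2}) together with Lemma~\ref{NorDerCont}. You in fact supply slightly more detail than the paper does, in particular the continuity of $u$ itself via the approximate-identity property of $P_h$ and the translation-invariance step needed to move Lemma~\ref{NorDerCont} from the origin to an arbitrary boundary point, both of which the paper leaves implicit.
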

\section{Appendix}

\subsection{General kernels $P_{\alpha,\beta}$ of Poisson's type}
In \cite{leut} author introduced special Riemannian metrics on unit ball and upper half-space in $\mathbb{R}^n$ and corresponding Laplace-Beltrami operators. Following methods described in this article, one can investigate boundary behaviour of solutions to corresponding Dirichlet's problems. Namely, Poisson's kernel for this kind of operators, may be written in the form\vspace{-3mm}
\begin{equation*}
P_{\alpha,\beta}(x,y)=\frac{(1-|x|^2)^\alpha}{|x-y|^{2\beta}},\vspace{-2mm}
\end{equation*}
for $\alpha,\beta \in \mathbb{R}$,  $\beta>0$.
If we wish to emphasize that $x$ is a fixed for a moment, sometime we write  $x_0$  instead of $x$.
Set $P=P_{\alpha,\beta}$, $d=d(x)= 1-|x|^2$,  $E=E(x,y)= |x-y|^2$,  $p=d_\alpha=d^{\alpha}$,  $q=E_\beta= E^\beta$ and $X= X(x_0,v^0,y)= \langle \nabla_x P_{\alpha,\beta}(x_0,y),v^0\rangle$,
where $x_0\in\mathbb{ B}_p$  and  $v^0\in T_{x_0} \mathbb{B}_p$ is the unit vector.

Check that  $X (x,y)=2 P Y$, where\vspace{-2mm}
$$Y=Y(x,y)=\beta \frac{\langle y,v^0 \rangle  -a_0}{E} - \alpha \frac{a_0}{d}=\beta \frac{\langle y,v^0 \rangle}{E} - b_0 ,\vspace{-2mm}$$
$a_0=a_0(x_0,v^0)= \langle x_0,v^0 \rangle$  and $b_0=\left(\frac{\beta}{E} +  \frac{\alpha}{d}\right)a_0$.   In particular,  $a_0(0,v^0)= 0$,  $b_0(0,v^0)= 0$.\smallskip

In harmonic case, $\alpha=1$, $\beta=n/2$    and     for $y\in \mathbb{S}$, $P(0,y)=1$, $Y(0,y,l)=\frac{n}{2} \langle y,l\rangle$  and therefore
if $v$ is harmonic bounded on $\mathbb{B}$,
$D_lv(0)= \langle \nabla_x v (0),l  \rangle= n \int_{S}\langle y,l\rangle v^* d\sigma (y)$.

In hyperbolic harmonic case, $\alpha=\beta=n-1$.
\medskip

In \cite{Ge} Geller
introduced a family of diferential operators,\vspace{-2mm}
$$ \Delta_{\alpha,\beta}=(1-|z|^2) {\sum _{i,j}(\delta_{ij} - z_i \overline{z}_j D_i \overline{D}_j + \alpha R + \beta \overline{R} -\alpha  \beta)},\vspace{-4mm}$$
where  $R=\sum z_i D_i$.   $\varphi_z$ is the automorphism of
the ball that maps $z$ to $0$, and such that  $\varphi_z^2
= \mathrm{Id}$ (see \cite[pg 297.]{Ru})
\be
d\lambda (z)= \frac{1}{(1-|z|^2)^{n+1}} \dd V(z)
\ee
 For  Dirichlet problem see formula (3.5) in \cite{Ahern}:
 \be
 u(z)= \int_{S^n}u(\zeta)P_{\alpha,\beta}(z,\zeta)\dd A(\zeta)+   \int_{\mathbb{B}^n}\Delta_{\alpha,\beta} u(\omega)(1-\overline{z}\omega)^\alpha(1-z\overline{\omega})^\beta  (1-|\omega|^2)^{-\alpha-\beta} \dd\lambda (\omega)
 \ee
For instance, it holds if
$u\in C^2(\mathbb{B}^n) \cap C(\overline{\mathbb{B}^n})$ and
 \be
 \int_{\mathbb{B}^n}|\Delta_{\alpha,\beta} u(\omega)|\frac{\dd V(\omega)}{1-|\omega|}< +\infty.
  \ee

\textbf{Acknowledgments} The authors are indebted to M. Arsenovi\'c for an interesting discussions on this paper.

\end{document}